\renewcommand{\mathcal}{\mathscr}
\numberwithin{equation}{section}
\newtheorem{theorem}{Theorem}[section]
\newtheorem{lemma}[theorem]{Lemma}
\newtheorem{proposition}[theorem]{Proposition}
\newtheorem{corollary}[theorem]{Corollary}
\newcommand{\R}{\mathbb R}
\newcommand{\N}{\mathbb N}
\renewcommand{\leq}{\leqslant}
\renewcommand{\le}{\leqslant}
\renewcommand{\ge}{\geqslant}
\renewcommand{\epsilon}{\varepsilon}
\newcommand{\per}{\,{\rm Per}_s}
\begin{document}

\author[S. Dipierro]{Serena Dipierro}
\address[Serena Dipierro]{School of Mathematics and Statistics,
University of Melbourne,
Richard Berry Building,
Parkville VIC 3010,
Australia;
School of Mathematics and Statistics,
University of Western Australia,
35 Stirling Highway,
Crawley, Perth
WA 6009, Australia;
Weierstra{\ss} Institut f\"ur Angewandte
Analysis und Stochastik, Hausvogteiplatz 5/7, 10117 Berlin, Germany}
\email{serena.dipierro@ed.ac.uk}

\author[O. Savin]{Ovidiu Savin}
\address[Ovidiu Savin]{Department of Mathematics, Columbia University,
2990 Broadway,
New York NY 10027, USA.}
\email{savin@math.columbia.edu}

\author[E. Valdinoci]{Enrico Valdinoci}
\address[Enrico Valdinoci]{School of Mathematics and Statistics,
University of Melbourne,
Richard Berry Building,
Parkville VIC 3010,
Australia;
School of Mathematics and Statistics,
University of Western Australia,
35 Stirling Highway,
Crawley, Perth
WA 6009, Australia;
Weierstra{\ss} Institut f\"ur Angewandte
Analysis und Stochastik, Hausvogteiplatz 5/7, 10117 Berlin, Germany;
Dipartimento di Matematica, Universit\`a degli studi di Milano,
Via Saldini 50, 20133 Milan, Italy.}
\email{enrico@math.utexas.edu}

\title{Graph properties for nonlocal minimal surfaces}

\begin{abstract}
In this paper we show that a nonlocal minimal surface which is
a graph outside a cylinder is in fact a graph in the whole of the space.

As a consequence, in dimension~$3$, we show that the graph
is smooth.

The proofs rely on convolution techniques
and appropriate integral estimates which show the pointwise
validity of an Euler-Lagrange equation related to the nonlocal mean curvature.
\end{abstract}

\subjclass[2010]{49Q05, 35R11, 53A10}
\keywords{Nonlocal minimal surfaces, graph properties, regularity theory.}

\maketitle

\section{Introduction} 

This paper deals with the geometric properties
of the minimizers of a nonlocal perimeter functional.

More precisely, given~$s\in(0,1/2)$,
and an open set~$\Omega\subseteq\R^n$,
the $s$-perimeter of a set~$E\subseteq\R^n$ in~$\Omega$ 
was defined in~\cite{CRS}
as
$$ \per(E,\Omega) := L(E\cap\Omega, E^c)+L(\Omega\setminus E,
E\setminus\Omega),$$
where~$E^c:=\R^n\setminus E$ and, for any
disjoint sets~$F$ and~$G$,
$$ L(F,G):=\iint_{F\times G} \frac{dx\,dy}{|x-y|^{n+2s}}.$$
This nonlocal perimeter captures the global contributions
between the set~$E$ and its complement
and it is related to some models in geometry and physics,
such as the motion by nonlocal mean curvature (see~\cite{SOU})
and the phase transitions in presence of long-range interactions
(see~\cite{GAMMA}).\medskip

As customary in the calculus of
variation literature,
one says that~$E$ is $s$-minimal in a bounded open set~$\Omega$ if~$\per(E,\Omega)<+\infty$
and~$\per(E,\Omega)\le \per(F,\Omega)$ among all the sets~$F$
which coincide with~$E$ outside~$\Omega$
(with a slight abuse of language, when~$\Omega$
is unbounded, we say that~$E$ is $s$-minimal
in~$\Omega$ if it is $s$-minimal
in any bounded open subsets of~$\Omega$, see also~\cite{LOM}
for further details).\medskip

Clearly, the above definition does not distinguish, in principle,
between sets whose symmetric difference has vanishing Lebesgue measure.
As in the case of minimal surfaces, we will implicitly
identify sets up to sets of zero measure.
More precisely, whenever necessary, we will
take a representation of the set
which coincides with
the ``measure theoretic
interior''
of the original set, i.e.
those points for which there exists a ball of positive radius around them
which is included in the set, up to sets of measure zero
(we refer to Appendix~\ref{9udfdtfd6yh665544}
at the end of this paper for a detailed discussion).\medskip

Several analytic and geometric properties of $s$-minimal sets
have been recently investigated, in terms,
for instance, of asymptotics~\cite{MAZYA, BREZIS, CV, MARTIN, DFPV},
regularity~\cite{CV-REG, SV-REG, figalli}
and classification~\cite{FALL-CABRE, CIRAOLO}. Some examples of
$s$-minimal sets (or, more generally, of sets which
possess vanishing nonlocal mean curvatures) have been given
in~\cite{LAWSON, NOSTRO}.
\medskip

The main result of this paper establishes that an $s$-minimal set is
a subgraph, if so are its exterior data:

\begin{theorem}\label{GRAPH}
Let~$\Omega_o$ be an open and bounded subset
of~$\R^{n-1}$ with boundary of class~$C^{1,1}$,
and let~$\Omega:=\Omega_o\times\R$.
Let~$E$ be an $s$-minimal set in~$\Omega$.
Assume that
\begin{equation}\label{HJ:GRAPH:2}
E\setminus\Omega = \{ x_n < u(x'),\; x'\in\R^{n-1}\setminus\Omega_o \},
\end{equation}
for some continuous function~$u:\R^{n-1}\to\R$.
Then
\begin{equation}\label{p98T78-00gra}
E\cap \Omega = \{ x_n < v(x'),\; x'\in\Omega_o \},
\end{equation}
for some~$v:\R^{n-1}\to\R$, where~$v$ is a uniformly
continuous 
%%% and bounded 
function
in~${\Omega_o}$, with~$v=u$ outside~$\Omega_o$.
\end{theorem}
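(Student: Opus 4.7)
The plan is a vertical sliding (moving-plane) argument, adapted to the nonlocal setting. For $t\in\R$, set $E^t := E + t\,e_n$. Since $\Omega = \Omega_o\times\R$ and $\per$ are both invariant under vertical translations, every $E^t$ is $s$-minimal in $\Omega$, with exterior data equal to the subgraph of $u + t$. The subgraph conclusion~\eqref{p98T78-00gra} is equivalent to the statement that $E \subset E^t$ modulo null sets for every $t > 0$.

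Fix $t > 0$. Since the subgraph of $u + t$ contains that of $u$, the intersection $A := E \cap E^t$ and union $B := E \cup E^t$ agree, outside~$\Omega$, with $E$ and $E^t$ respectively. Minimality gives $\per(E,\Omega) \le \per(A,\Omega)$ and $\per(E^t,\Omega) \le \per(B,\Omega)$; on the other hand, the functional $F \mapsto \per(F,\Omega)$ is submodular, because the pointwise quantity $(\chi_F(x) - \chi_F(y))^2$ is a submodular function of $F$ and $\per$ differs from $\frac12\iint(\chi_F(x)-\chi_F(y))^2 K\,dx\,dy$ by a term depending only on the fixed exterior data. Summing and comparing forces equality throughout, so $A$ and $B$ are themselves $s$-minimal in $\Omega$ and satisfy the corresponding Euler--Lagrange equation.

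Now set $T := \inf\{t \ge 0 : E \subset E^t \text{ mod null sets}\}$. A barrier argument---sliding a half-space from above, exploiting that $u$ is bounded on the compact set $\overline{\Omega_o}$---shows $T < \infty$, and continuity in $t$ gives $E \subset E^T$ a.e. Suppose for contradiction that $T > 0$. Across $\partial\Omega$ the two exterior data are separated by the positive gap $T$, so $\partial E$ and $\partial E^T$ must touch at some \emph{interior} point $x_0 \in \Omega$. Applying the pointwise Euler--Lagrange identity $H_s[E](x_0) = H_s[E^T](x_0) = 0$ (whose pointwise validity is precisely what the convolution/integral-estimate machinery announced in the abstract is designed to deliver) and subtracting, one obtains
\[
0 \;=\; -2\int_{\R^n} \frac{\chi_{E^T}(y) - \chi_E(y)}{|y-x_0|^{n+2s}}\,dy,
\]
which is the integral of a nonnegative function against a strictly positive kernel; hence $\chi_{E^T} = \chi_{E}$ almost everywhere. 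This contradicts the difference of exterior data, so $T = 0$. Setting $v(x') := \sup\{x_n : (x', x_n) \in \overline{E}\}$ then makes $E\cap\Omega$ the subgraph of $v$, and uniform continuity of $v$ in $\Omega_o$ is obtained by rerunning the same sliding with a joint horizontal-vertical translate $E + (h,\tau)$: for small $|h|$ the exterior data shift by the modulus of $u$ on $\overline{\Omega_o}$, so the threshold $\tau$ required for containment inherits that modulus.

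\textbf{Anticipated main obstacle.} The argument hinges entirely on the pointwise Euler--Lagrange identity at an interior contact point~$x_0$: because $E$ is only an essential minimizer and $\partial E$ need not be smooth a priori near~$x_0$, the singular integral defining $H_s[E](x_0)$ must first be made pointwise meaningful and then rigorously derived from minimality alone---this is the technical heart of the paper. A secondary care point is the initialization $T<\infty$ of the sliding when $u$ is unbounded on $\R^{n-1}\setminus\Omega_o$: one must localize to a large but finite subcylinder, using that first contact cannot occur at infinity since the configuration inherits the subgraph structure of $u$ far from $\partial\Omega_o$.
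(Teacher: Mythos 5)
Your overall strategy (vertical sliding $E^t = E + t e_n$, defining the infimum $T$, showing $T = 0$ by contradiction at a touching point) matches the paper's skeleton, and the submodularity observation about $E\cap E^t$ and $E\cup E^t$ is a correct and even slightly slicker way to produce additional minimizers, though it is not ultimately used in your argument. However, there is a fatal gap: you assert that because the exterior data of $E$ and $E^T$ are separated by the positive gap $T$ across $\partial\Omega$, the first contact between $\partial E$ and $\partial E^T$ ``must'' occur at an interior point $x_0\in\Omega$. This is false, and it is precisely the failure mode the paper is built to handle. Nonlocal minimal surfaces exhibit \emph{boundary stickiness}: $\partial E$ can (and typically does) contain a vertical segment lying on $\partial\Omega = \partial\Omega_o\times\R$, because the solution of the Dirichlet problem need not be continuous up to the boundary. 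In that situation $\partial E$ and $\partial E^T$ can share a vertical piece on $\partial\Omega$ and first touch at a point $p\in\partial\Omega$ without ever touching in the interior. The paper devotes an entire half of the contradiction argument to this case, and it is exactly here that the $C^{1,1}$ regularity of $\partial\Omega_o$ enters: one invokes the obstacle-problem regularity result of Caffarelli--De Silva--Savin (their Theorem~1.1, restated as Theorem~\ref{PRO:TH}) to show $\partial E$ is a $C^{1,\frac12+s}$ graph in a normal direction near $p$, which then lets one apply a $C^{1,\alpha}$ touching lemma (Lemma~\ref{lemma36}) to derive the Euler--Lagrange equation at $p$ in the strong sense and obtain a contradiction. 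Your proposal neither notices the need for this case nor explains why the hypothesis on $\partial\Omega_o$ would be used anywhere, which is a strong internal signal that the argument is incomplete.

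Beyond that, the interior case is also left unresolved. You correctly flag the pointwise Euler--Lagrange identity at a rough contact point as the ``technical heart,'' but you then simply assume it. The paper does not prove $H_s[E](x_0) = H_s[E^T](x_0) = 0$ directly at the raw touching point; instead it replaces $E$ by its supconvolution $E^\sharp_\delta$ and $E^T$ by (a translate of) the subconvolution $E^\flat_\delta$, so that all touchings happen along $C^{1,1}$ ball boundaries where the viscosity Euler--Lagrange inequalities of Caffarelli--Roquejoffre--Savin apply in the classical sense; one then passes $\delta\to 0$. Moreover, because the sup/subconvolution contact in a bounded region may not be a global contact, one cannot conclude $E^\sharp_\delta = E^\flat_\delta + \tau e_n$ outright, and a quantitative estimate (Proposition~\ref{TOUCH:omega-bis}) on how much the sets can fail to be nested outside a large cylinder is required, with the far-field and near-boundary error terms made to vanish in $R\to\infty$ and $\eta\to 0$. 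None of this machinery is present in your proposal; asserting its existence does not close the argument.

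In short: the skeleton is right and the submodularity trick is a nice touch, but the two load-bearing pieces---the boundary contact case (where the $C^{1,1}$ hypothesis and the obstacle-problem regularity are used) and the sup/subconvolution construction that converts a touching between rough sets into a touching between $C^{1,1}$ sets---are both missing, and the first is not even identified as a case that needs handling.
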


We mention that, in general,
$s$-minimal surfaces are not continuous up to the boundary
of the domain (even if the datum outside is smooth),
and indeed boundary stickiness phenomena occur (see~\cite{NOSTRO}
for concrete examples). The possible discontinuity at the boundary
makes the proof of Theorem~\ref{GRAPH}
quite delicate, since the graph property ``almost fails''
in a cylinder (see Theorem~1.2 in~\cite{NOSTRO}),
and, in general, the graph property
cannot be deduced only from the outside data
but it may also depend on the regularity of the domain.

More precisely, we stress that, given the examples
in \cite{NOSTRO},
even in simple cases, boundary
stickiness of nonlocal minimal surfaces
has to be expected,
therefore the 
Dirichlet problem for nonlocal minimal surfaces
cannot be solved in the class of functions that are
continuous up to the boundary (even for smooth
Dirichlet data).
Nevertheless, Theorem~\ref{GRAPH} says that one can solve
the Dirichlet problem in the class of (not necessarily continuous)
graphs.

Due to the boundary stickiness (i.e., due to
the possibly discontinuous behavior of the solution
of the Dirichlet problem),
the proof of Theorem~\ref{GRAPH} cannot be
just a simple generalization of the proof of similar results
for classical minimal surfaces.
As a matter of fact, 
in principle, once the surface reaches a vertical
tangent, it might be possible that it bends in the ``wrong'' direction,
which would
make the graph property false (our result in
Theorem~\ref{GRAPH} excludes exactly this
possibility).

The key point for this
is that, due to the study of the obstacle problem for nonlocal minimal
surfaces treated
in \cite{PRO}, we more or less understand how the minimal surfaces
separate from
the boundary of the cylinder if a vertical piece is present.

In addition, since we are not assuming any smoothness of the surface
to start with, some care is needed to compute the fractional
mean curvature in a pointwise sense.
\medskip

The proof of Theorem~\ref{GRAPH}
is based on a sliding method, but some (both technical
and conceptual)
modifications are needed to make the classical argument work,
due to the contributions ``coming from far''.
We stress that, since the $s$-minimal set is not assumed to be
smooth, some supconvolutions techniques are needed to
take care of interior contact points.
Moreover, a fine analysis of the possible contact points which lie
on the boundary (and at infinity)
is needed to complete the arguments.
\medskip

As a matter of fact, we think that it is an interesting open
problem to determine whether or not Theorem~\ref{GRAPH}
holds true without the assumption
that~$\partial\Omega_o$ is of class~$C^{1,1}$
(for instance, whether or not a similar statement holds
by assuming only that~$\partial\Omega_o$ is
Lipschitz). We remark that, under weak regularity assumptions,
one cannot make use of the results in \cite{PRO}.
\medskip

The results in Theorem~\ref{GRAPH}
may be strengthen in the case of dimension~$3$, by proving
that two-dimensional minimal graphs
are smooth. Indeed, we have:

\begin{theorem}\label{coro:reg}
Let~$\Omega_o$ be an open and bounded subset
of~$\R^{2}$ with boundary of class~$C^{1,1}$,
and let~$\Omega:=\Omega_o\times\R$.
Let~$E$ be an $s$-minimal set in~$\Omega$.
Assume that
\begin{equation*}
E\setminus\Omega = \{ x_n < u(x'),\; x'\in\R^{n-1}\setminus\Omega_o \},
\end{equation*}
for some continuous function~$u:\R^{n-1}\to\R$.
Then
\begin{equation}\label{aJk:hg1}
E\cap \Omega = \{ x_3 < v(x'),\; x'\in\Omega_o \},
\end{equation}
for some~$v\in C^\infty(\Omega_o)$.
\end{theorem}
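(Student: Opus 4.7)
The plan is to apply Theorem~\ref{GRAPH} to reduce the regularity question to an interior regularity statement for a continuous $s$-minimal graph in $\R^3$, and then to combine flatness techniques with a bootstrap based on the pointwise Euler-Lagrange equation. By Theorem~\ref{GRAPH}, we write $E\cap\Omega=\{x_3<v(x')\}$ with $v:\Omega_o\to\R$ uniformly continuous, so the goal reduces to showing $v\in C^\infty(\Omega_o)$. Fix $x_0'$ in the interior of $\Omega_o$ and set $p_0:=(x_0',v(x_0'))$.

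First, I would establish flatness of $\partial E$ at some scale around $p_0$: i.e., for some $r>0$,
\[
\partial E\cap B_r(p_0)\subset\bigl\{|x_3-v(x_0')|\le\eps_0\, r\bigr\},
\]
where $\eps_0$ is the universal flatness threshold from the improvement-of-flatness theorem in~\cite{CRS}. The graph structure (monotonicity in $x_3$) ensures that every blow-up $E_\lambda:=\lambda^{-1}(E-p_0)$ is itself a subgraph, and $L^1_{\mathrm{loc}}$-compactness of $s$-minimal sets produces, along a subsequence $\lambda\to 0$, an $s$-minimal subgraph cone $E_\infty\subset\R^3$. The low cross-sectional dimension now enters: by~\cite{SV-REG} every planar $s$-minimal cone is a half-plane; combined with dimension reduction and the fact that $E_\infty$ is a one-homogeneous subgraph, this forces $E_\infty$ to be the horizontal half-space $\{x_3<v(x_0')\}$, which translates into the required flatness at some scale.

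Once flatness is in place, iterating the improvement-of-flatness theorem of~\cite{CRS} yields $C^{1,\alpha}$ regularity of $\partial E$ in a neighbourhood of $p_0$, hence of $v$ near $x_0'$. With $v\in C^{1,\alpha}$ in hand, the pointwise Euler-Lagrange equation for the fractional mean curvature (whose validity is one of the themes of the present paper, established via the convolution techniques mentioned in the abstract) becomes a nonlocal elliptic equation of order $1+2s$ satisfied by $v$. A Schauder-type bootstrap for such equations, in the spirit of~\cite{figalli}, then upgrades $v$ iteratively from $C^{1,\alpha}$ to $C^\infty(\Omega_o)$.

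The delicate step is the initial flatness claim: continuity of $v$ alone does not provide the scale-invariant flatness required by~\cite{CRS}, so the tangent-cone analysis above cannot be bypassed. It is precisely at this point that the dimensional hypothesis $n=3$ is essential, since only then is the cross-section two-dimensional and covered by the rigidity of~\cite{SV-REG}; in higher dimension, non-flat one-homogeneous subgraph cones could in principle be $s$-minimal and would obstruct the argument.
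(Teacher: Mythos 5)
Your overall route coincides with the paper's: apply Theorem~\ref{GRAPH}, blow up at an interior point of $\partial E\cap\Omega$, show the blow-up is a half-space, invoke the flatness-implies-regularity theorem of~\cite{CRS} (their Theorem~9.4) to get $C^{1,\alpha}$, and then bootstrap. Where you diverge is in how you conclude the blow-up cone $E_\infty$ is a half-space. The paper records that the blow-up is again an $s$-minimal epigraph (citing~(5.8) in~\cite{figalli}) and then invokes the Bernstein-type Corollary~1.3 of~\cite{figalli} directly. You instead propose to combine~\cite{SV-REG} with dimension reduction and one-homogeneity of the subgraph. That combination does not close by itself: dimension reduction together with~\cite{SV-REG} only tells you $\partial E_\infty$ is regular away from the vertex, and a one-homogeneous $s$-minimal subgraph cone that is smooth away from its vertex is not a priori a half-space --- ruling out the non-flat ones is precisely the content of the Bernstein result in~\cite{figalli}, which involves a genuine graph-specific argument and must either be cited or re-proved.

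A smaller point: the final bootstrap in the paper is Theorem~1 of~\cite{BEGO}, not a consequence of the supconvolution machinery or the pointwise Euler--Lagrange lemmas of the present paper. Those tools are needed for Theorem~\ref{GRAPH} (where $\partial E$ is not known to be regular), whereas at the bootstrap stage $\partial E$ is already $C^{1,\alpha}$, so the Euler--Lagrange equation holds pointwise by a classical computation and Schauder-type estimates for the resulting nonlocal equation carry the rest.
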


The proof of Theorem~\ref{coro:reg}
relies on Theorem~\ref{GRAPH} and on a 
Bernstein-type result of~\cite{figalli}.
\medskip

The rest of the paper is organized as follows.
In Section~\ref{SEC:SUBCO}
we discuss the 
notion of supconvolutions and subconvolutions
for a nonlocal minimal surface, presenting the geometric
and analytic properties that we need for the proof of
Theorem~\ref{GRAPH}.

In Section \ref{KJ56DFfSE}
we collect a series of
auxiliary results needed to compute suitable integral
contributions and obtain an appropriate fractional mean
curvature equation in a pointwise sense
(i.e., not only in the sense of viscosity, as done
in the previous literature).

The proof of
Theorem~\ref{GRAPH} is given in Section~\ref{HY:08}
and the proof of Theorem \ref{coro:reg}
is given in Section~\ref{LJrtyuoo}.

Finally, in Appendix~\ref{9udfdtfd6yh665544}, we
observe that we can modify the $s$-minimal set~$E$
by a set of measure zero in order to identify~$E$
with the set of its interior points in the measure
theoretic sense.

\section{Supconvolution and subconvolution of a set}\label{SEC:SUBCO}

In this section, we introduce the notion of supconvolution
and discuss its basic properties. This is the nonlocal modification
of a technique developed in~\cite{CORDOBA} for the local case.

Given~$\delta>0$, we define the supconvolution
of the set~$E\subseteq\R^n$ by
$$ E^\sharp_\delta := \bigcup_{x\in E} \overline{ B_\delta(x) }.$$

\begin{lemma}\label{ConC1} We have that
$$ E^\sharp_\delta = \bigcup_{ {v\in\R^n}\atop{|v|\le\delta} } (E+v).$$
\end{lemma}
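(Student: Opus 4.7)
The plan is to prove the identity by a direct double inclusion, unpacking the definitions of both sides into a common logical statement about the existence of a point in $E$ within distance $\delta$.

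First I would fix an arbitrary $y \in \R^n$ and observe that $y \in E^\sharp_\delta$ is, by definition, equivalent to the existence of some $x \in E$ with $y \in \overline{B_\delta(x)}$, i.e.\ $|y-x|\le\delta$. The natural bijection here is to set $v := y-x$, so that $y = x+v$ with $x\in E$ and $|v|\le\delta$. This shows immediately that $y \in E^\sharp_\delta$ if and only if $y \in E+v$ for some $v$ with $|v|\le\delta$, which is exactly the condition $y \in \bigcup_{|v|\le\delta}(E+v)$.

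For the write-up I would split this into the two inclusions explicitly. For $E^\sharp_\delta \subseteq \bigcup_{|v|\le\delta}(E+v)$, I would take $y \in E^\sharp_\delta$, produce $x\in E$ with $|y-x|\le\delta$, define $v:=y-x$, and note that $y = x+v \in E+v$. For the reverse inclusion, I would take $y \in E+v$ with $|v|\le\delta$, write $y = x+v$ with $x \in E$, and observe $|y-x|=|v|\le\delta$, so $y \in \overline{B_\delta(x)} \subseteq E^\sharp_\delta$.

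Since the statement is essentially a reformulation of the definition of a neighborhood of a set in terms of Minkowski sums, there is no real obstacle here; this lemma is stated chiefly because the translation-invariant form on the right-hand side is what will be convenient to exploit in later sections when comparing $E^\sharp_\delta$ with translates of $s$-minimal competitors. The only minor thing to be careful about is keeping the inequality non-strict (so that the union on the right is over the closed ball of radii, matching the closed balls $\overline{B_\delta(x)}$ in the definition of $E^\sharp_\delta$); no measure-theoretic or topological subtlety beyond this is required.
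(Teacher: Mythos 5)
Your proof is correct and follows exactly the same route as the paper: a double inclusion, setting $v:=y-x$ in one direction and $x:=y-v$ in the other. Nothing to add.
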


\begin{proof} Let~$y\in \overline{B_\delta(x)}$, with~$x\in E$.
Let~$v:=y-x$. Then~$|v|\le\delta$ and~$y=x+v\in E+v$,
and one inclusion is proved.

Viceversa, let now~$y\in E+v$, with~$|v|\le\delta$.
We set in this case~$x:=y-v$. Hence~$|y-x|=|v|\le\delta$,
thus~$y\in \overline{B_\delta(x)}$. In addition,~$x\in (E+v)-v=E$,
so the other inclusion is proved.
\end{proof}

\begin{corollary}\label{Cor-CONV}
If~$p\in\partial E^\sharp_\delta$, then there exist~$v\in\R^n$,
with~$|v|=\delta$, and~$x_o\in \partial E$ such that~$p=x_o+v$
and~$B_\delta(x_o)\subseteq E^\sharp_\delta$.

Also, if~$E^\sharp_\delta$ is touched from the outside at~$p$
by a ball~$B$, then~$E$ is touched from the outside at~$x_o$
by~$B-v$.
\end{corollary}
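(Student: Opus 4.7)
The plan is to exploit the Minkowski-sum description provided by Lemma~\ref{ConC1}, namely $E^\sharp_\delta=\bigcup_{|v|\le\delta}(E+v)$. This representation will make the two assertions of the corollary essentially tautological once one accounts for the fact that $E^\sharp_\delta$ need not be closed.

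First I would fix $p\in\partial E^\sharp_\delta$ and choose a sequence $p_k\to p$ with $p_k\in E^\sharp_\delta$. By Lemma~\ref{ConC1}, write $p_k=x_k+v_k$ with $x_k\in E$ and $|v_k|\le\delta$. Compactness of $\overline{B_\delta(0)}$ allows me to pass to a subsequence with $v_k\to v$ for some $|v|\le\delta$; then $x_k=p_k-v_k\to p-v=:x_o\in\overline{E}$. This already yields the decomposition $p=x_o+v$, and it remains to promote $v$ to $\partial B_\delta(0)$ and $x_o$ to $\partial E$.

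I would do this by ruling out the two strict-interior possibilities. If $|v|<\delta$, then for every $y$ in a small enough ball around $p$ one has $|y-x_k|\le\delta$ for $k$ large, so $y\in\overline{B_\delta(x_k)}\subseteq E^\sharp_\delta$; this puts $p$ in the interior of $E^\sharp_\delta$, contradicting $p\in\partial E^\sharp_\delta$. If instead $x_o\in\mathrm{int}(E)$, pick $r>0$ with $B_r(x_o)\subseteq E$; then Lemma~\ref{ConC1} gives $B_{r+\delta}(x_o)=B_r(x_o)+\overline{B_\delta(0)}\subseteq E^\sharp_\delta$, an open neighbourhood of $p$. Hence $|v|=\delta$ and $x_o\in\partial E$. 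The inclusion $B_\delta(x_o)\subseteq E^\sharp_\delta$ then follows from the same idea: for any $y\in B_\delta(x_o)$ select $x_k\in E$ within $\delta-|y-x_o|$ of $x_o$; then $|y-x_k|<\delta$, so $y\in\overline{B_\delta(x_k)}\subseteq E^\sharp_\delta$.

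For the exterior-ball statement I would just translate. The identity $x_o=p-v$ places $x_o$ on $\partial(B-v)$, and any hypothetical $y\in(B-v)\cap E$ would give $y+v\in B\cap(E+v)\subseteq B\cap E^\sharp_\delta$, contradicting the fact that $B$ is an exterior ball to $E^\sharp_\delta$ at $p$. No real obstacle is expected; the only subtlety throughout is that the supconvolution is generically not closed, which is why the decomposition $p=x_o+v$ must be obtained as a limit and why the open-interior dichotomies above are needed to pin $|v|$ down to $\delta$ and $x_o$ down to $\partial E$.
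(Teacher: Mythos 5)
Your proof is correct and follows essentially the same route as the paper's: extract a convergent sequence $p_k=x_k+v_k$, pass to the limit to get $p=x_o+v$ with $x_o\in\overline{E}$ and $|v|\le\delta$, and then pin down $|v|=\delta$, $x_o\in\partial E$, and $B_\delta(x_o)\subseteq E^\sharp_\delta$ by the same kind of ``inflate a ball around $x_k$'' reasoning, finishing the exterior-ball claim by translation. The single place you diverge from the paper is in showing $x_o\in\partial E$: the paper produces a second sequence $q_j\in\R^n\setminus E^\sharp_\delta$ with $q_j\to p$, observes $\overline{B_\delta(q_j)}\cap E=\varnothing$ hence $q_j-v_j\in E^c$, and concludes $x_o\in\overline{E^c}$; you instead rule out $x_o\in\mathrm{int}(E)$ by noting that $B_r(x_o)\subseteq E$ would force $B_{r+\delta}(x_o)\subseteq E^\sharp_\delta$, giving an open neighbourhood of $p$. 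Both are valid and of comparable length; your version is arguably a touch more direct since it avoids the auxiliary exterior sequence and reuses the same Minkowski-sum inflation already needed elsewhere in the argument.
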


\begin{proof} Since~$p\in\overline{E^\sharp_\delta}$,
we have that there exists a sequence~$p_j\in E^\sharp_\delta$
such that~$p_j\to p$ as~$j\to+\infty$.
By Lemma~\ref{ConC1}, we have that~$p_j\in E+v_j$, for some~$v_j\in\R^n$
with~$|v_j|\le\delta$. That is, there exists~$x_j\in E$ such that~$p_j=
x_j+v_j$. By compactness, up to a subsequence we may assume that~$v_j\to v$
as~$j\to+\infty$, for some~$v\in\R^n$
with
\begin{equation}\label{Ofg6799}
|v|\le\delta.\end{equation} Therefore
\begin{equation}\label{Ofg6799-BIS}
x_j= p_j-v_j\to p-v =: x_o\end{equation}
as $j\to+\infty$.
By construction, 
\begin{equation}
\label{PDSA1} x_o\in\overline{E}\end{equation}
and
\begin{equation}
\label{PDSA2} p=x_o+v.\end{equation}
Now we show that
\begin{equation}\label{SA45fhhH}
x_o \in \overline{E^c}.\end{equation}
For this, since~$p\in\overline{\R^n\setminus E^\sharp_\delta}$,
we have that there exists a sequence~$q_j\in \R^n\setminus E^\sharp_\delta$
such that~$q_j\to p$ as~$j\to+\infty$.

Notice that
\begin{equation}\label{KJgh67pp99}
\overline{B_\delta(q_j)}\cap E=\varnothing.\end{equation}
Indeed, if not, we would have that there exists~$z_j\in\overline{ B_\delta(q_j)}\cap E$.
So we can define~$w_j:=q_j-z_j$. We see that~$|w_j|\le\delta$
and therefore~$q_j=z_j+w_j\in E+w_j\subseteq E^\sharp_\delta$,
which is a contradiction.

Having established~\eqref{KJgh67pp99}, we use it to deduce that~$q_j-v_j
\in E^c$. Thus passing to the limit
$$ x_o=p-v=\lim_{j\to+\infty} q_j-v_j\in
\overline{E^c}.$$
This proves~\eqref{SA45fhhH}.

{F}rom \eqref{PDSA1} and~\eqref{SA45fhhH},
we conclude that
\begin{equation}\label{BGf-1k}
x_o\in\partial E.\end{equation}
Now we show that
\begin{equation}\label{k7800a}
|v|=\delta.
\end{equation}
To prove it, suppose not. Then, by~\eqref{Ofg6799},
we have that~$|v|<\delta$. That is, there exists~$a\in(0,\delta)$
such that~$|v|<\delta-a$. Then, by~\eqref{Ofg6799-BIS},
$$ |x_j-p|\le |x_j-x_o|+|x_o-p|=
|x_j-x_o| +|v|<\delta-\frac{a}{2},$$
if~$j$ is large enough. Hence~$B_{a/2}(p)\subseteq B_\delta(x_j)
\subseteq E^\sharp_\delta$, that says that~$p$ lies
in the interior of~$E^\sharp_\delta$. This is in contradiction
with the assumptions of
Corollary~\ref{Cor-CONV}, and so~\eqref{k7800a} is proved.

Now we claim that
\begin{equation}\label{Ju753110999}
B_\delta(x_o)\subseteq E^\sharp_\delta.
\end{equation}
To prove this, let~$z\in B_\delta(x_o)$.
Then, $|z-x_o|\le \delta-b$, for some~$b\in(0,\delta)$.
Accordingly, by~\eqref{Ofg6799-BIS}, we have that~$|z-x_j|\le \delta-\frac{b}{2}$
if~$j$ is large enough. Hence~$z\in B_\delta(x_j)\subseteq E^\sharp_\delta$.
This proves~\eqref{Ju753110999}.

Thanks to~\eqref{PDSA2},
\eqref{BGf-1k}, \eqref{k7800a} and~\eqref{Ju753110999},
we have completed the proof of the
first claim in the statement of Corollary~\ref{Cor-CONV}.

Now, to prove the
second claim in the statement of Corollary~\ref{Cor-CONV},
let us consider a ball~$B$ such that~$B\subseteq
\R^n\setminus E^\sharp_\delta$ and~$p\in\partial B$.
Then $x_o=p-v\in(\partial B)-v=\partial (B-v)$.
Moreover,
$$ B-v \subseteq
(\R^n\setminus E^\sharp_\delta)-v =
\R^n\setminus (E^\sharp_\delta -v).$$
Since~$E\subseteq E^\sharp_\delta$, we have that
$$ \R^n\setminus (E^\sharp_\delta -v) \subseteq \R^n\setminus (E-v).$$
Consequently, we obtain that~$B-v \subseteq \R^n\setminus (E-v)$,
which completes the proof
of the second claim of Corollary~\ref{Cor-CONV}.\end{proof}

The supconvolution has an important property
with respect to the fractional mean curvature,
as stated in the next result:

\begin{lemma}\label{AXr5yy}
Let~$p\in\partial E^\sharp_\delta$, $v\in\R^n$
with~$|v|\le\delta$ and~$x_o\in \partial E$ such that~$p=x_o+v$. Then
$$ \int_{\R^n} \frac{\chi_{E^\sharp_\delta}(y)-
\chi_{\R^n\setminus E^\sharp_\delta}(y)}{|p-y|^{n+2s}}\,dy
\ge 
\int_{\R^n} \frac{\chi_{E}(y)- \chi_{\R^n\setminus E}(y)}{|x_o-y|^{n+2s}}\,dy
.$$\end{lemma}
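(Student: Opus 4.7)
The strategy is to translate the right-hand integral so that both kernels are centered at the common point~$p$, and then to exploit the inclusion~$E+v\subseteq E^\sharp_\delta$ provided by Lemma~\ref{ConC1}. This reduces the inequality to the pointwise nonnegativity of the integrand of the difference.

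In detail, in the integral on the right I would perform the change of variables $z:=y+v$, so that $dz=dy$ and, using $x_o=p-v$,
\[
|x_o-y|\;=\;|(p-v)-(z-v)|\;=\;|p-z|,\qquad \chi_E(y)\;=\;\chi_E(z-v)\;=\;\chi_{E+v}(z).
\]
This rewrites the right-hand side as $\int_{\R^n}\frac{\chi_{E+v}(z)-\chi_{(E+v)^c}(z)}{|p-z|^{n+2s}}\,dz$. Using next the elementary identity $\chi_A-\chi_{A^c}=2\chi_A-1$ for both $A=E^\sharp_\delta$ and $A=E+v$, the difference between the left-hand side and this translated right-hand side collapses to
\[
\int_{\R^n}\frac{2\bigl(\chi_{E^\sharp_\delta}(z)-\chi_{E+v}(z)\bigr)}{|p-z|^{n+2s}}\,dz.
\]
Since Lemma~\ref{ConC1} gives $E+v\subseteq E^\sharp_\delta$, the numerator is pointwise nonnegative, and the desired inequality follows.

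The delicate point, which I would expect to be the main (and only) obstacle, is that the two integrals are singular at~$p$ and at~$x_o$ respectively and should be interpreted in the principal value sense. The translation $y\mapsto z=y+v$ is however an isometry sending $\{|y-x_o|\le\eps\}$ bijectively onto $\{|z-p|\le\eps\}$, so the two PV truncations match perfectly after the change of variables. Thus the difference $\mathrm{LHS}-\mathrm{RHS}$ equals the monotone limit, as $\eps\to 0^+$, of $\int_{|z-p|>\eps}2(\chi_{E^\sharp_\delta}(z)-\chi_{E+v}(z))\,|p-z|^{-n-2s}\,dz$; the integrand being nonnegative, this limit exists in $[0,+\infty]$, giving the stated inequality (and showing in passing that finiteness of the right-hand side forces finiteness of the left). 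Once this bookkeeping is in place, the lemma reduces to the elementary geometric containment $E+v\subseteq E^\sharp_\delta$ from Lemma~\ref{ConC1}, which is the real content of the statement.
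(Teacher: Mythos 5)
Your proposal is correct and follows exactly the same route as the paper's (very brief) proof: translation invariance of the kernel plus the inclusion $E+v\subseteq E^\sharp_\delta$ from Lemma~\ref{ConC1}. You simply spell out the change of variables and the principal-value bookkeeping that the paper leaves implicit.
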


\begin{proof} The claim follows simply
by the fact that~$E^\sharp_\delta\supseteq E+v$
and the translation invariance of the fractional mean curvature.
\end{proof}

\begin{corollary}\label{SufT67HHJ7}
Let~$E$ be an $s$-minimal set in~$\Omega$.
Let~$p\in\partial E^\sharp_\delta$. Assume that~$\overline{B_\delta(p)}
\subseteq\Omega$ and that~$E^\sharp_\delta$ is touched from the outside at~$p$
by a ball. Then
$$ \int_{\R^n} \frac{\chi_{E^\sharp_\delta}(y)-
\chi_{\R^n\setminus E^\sharp_\delta}(y)}{|p-y|^{n+2s}}\,dy
\ge0.$$
\end{corollary}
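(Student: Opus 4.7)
The plan is to reduce the assertion to the standard viscosity Euler--Lagrange inequality for $s$-minimal sets at a point of exterior ball touching, transferring that inequality from $x_o$ back to $p$ via Lemma~\ref{AXr5yy}.

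First, I would apply Corollary~\ref{Cor-CONV} to extract from the hypothesis the structural data: there exist $v\in\R^n$ with $|v|=\delta$ and $x_o\in\partial E$ such that $p=x_o+v$, and, since $E^\sharp_\delta$ is touched from the outside at $p$ by a ball $B$, the set $E$ is touched from the outside at $x_o$ by the translated ball $B-v$. The key geometric observation at this stage is that $x_o=p-v$ satisfies $|x_o-p|=\delta$, so $x_o\in\overline{B_\delta(p)}\subseteq\Omega$; hence $x_o$ is a point in the open set in which $E$ is $s$-minimal.

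Next, I would invoke the standard fact that an $s$-minimal set, at any boundary point that can be touched from the outside by a ball, satisfies the nonlocal mean curvature inequality in the viscosity sense, namely
\[
\int_{\R^n} \frac{\chi_{E}(y)-\chi_{\R^n\setminus E}(y)}{|x_o-y|^{n+2s}}\,dy \;\ge\; 0,
\]
where the integral is understood in the principal value sense (the external touching ball guarantees that the singular part near $x_o$ is integrable). This is the viscosity Euler--Lagrange inequality established in the Caffarelli--Roquejoffre--Savin framework and is applicable precisely because $x_o$ lies in the interior of $\Omega$ and is touched from the outside by the ball $B-v$.

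Finally, I would apply Lemma~\ref{AXr5yy} with the triple $(p,v,x_o)$ just produced: it tells us that the nonlocal mean curvature of $E^\sharp_\delta$ at $p$ dominates the nonlocal mean curvature of $E$ at $x_o$, which we have just shown to be nonnegative. The main conceptual obstacle is not in this corollary itself but in the viscosity inequality invoked in the previous step, since $E$ is a priori nonsmooth and the principal value integral must be handled with care at the touching point; however, this is exactly the statement for which the exterior ball is used, and hence it suffices for our purposes. Chaining the two inequalities yields the desired bound.
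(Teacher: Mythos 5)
Your proposal is correct and follows essentially the same route as the paper: apply Corollary~\ref{Cor-CONV} to obtain $v$ and $x_o\in\partial E$ with $x_o\in\overline{B_\delta(p)}\subseteq\Omega$ and an exterior ball at $x_o$, invoke the viscosity Euler--Lagrange inequality of Caffarelli--Roquejoffre--Savin at $x_o$, and transfer the inequality to $p$ via Lemma~\ref{AXr5yy}.
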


\begin{proof} By Corollary~\ref{Cor-CONV}, we know that
there exist~$v\in\R^n$
with~$|v|\le\delta$ and~$x_o\in \partial E$ such that~$p=x_o+v$,
and that~$E$ is touched by a ball from the outside at~$x_o$.

We remark that~$x_o\in \overline{B_\delta(p)}
\subseteq\Omega$. So, we can use
the Euler-Lagrange equation in the viscosity sense
(see Theorem~5.1 in~\cite{CRS}) and obtain that
$$ \int_{\R^n} \frac{\chi_{E}(y)-
\chi_{\R^n\setminus E}(y)}{|x_o-y|^{n+2s}}\,dy
\ge0.$$
This and Lemma~\ref{AXr5yy} give the desired result.
\end{proof}

The counterpart of the notion of
supconvolution is given by the notion of subconvolution.
That is, we define
$$ E^\flat_\delta := \R^n\setminus 
\big( (\R^n\setminus E)^\sharp_\delta\big).$$
In this setting, we have:

\begin{proposition}\label{TOUCH:omega}
Let~$E$ be an $s$-minimal set in~$\Omega$.
Let~$p\in\partial E^\sharp_\delta$. Assume that~$\overline{B_\delta(p)}
\subseteq\Omega$.

Assume also that $E^\sharp_\delta$ is touched from above
at~$p$ by a translation of~$E^\flat_\delta$, i.e. there exists~$\omega
\in\R^n$ such that~$E^\sharp_\delta\subseteq E^\flat_\delta+\omega$
and~$p\in(\partial E^\sharp_\delta)\cap
\big(\partial(E^\flat_\delta+\omega)\big)$.

Then~$E^\sharp_\delta=E^\flat_\delta+\omega$.
\end{proposition}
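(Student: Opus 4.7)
The plan is to prove $E^\sharp_\delta = F := E^\flat_\delta + \omega$ by comparing the fractional mean curvatures of the two sets at the contact point~$p$. Setting
$$ H(A, p) \;:=\; \int_{\R^n} \frac{\chi_{A}(y) - \chi_{\R^n\setminus A}(y)}{|p-y|^{n+2s}}\,dy, $$
the inclusion $E^\sharp_\delta \subseteq F$ yields the elementary pointwise identity
$$ H(F, p) - H(E^\sharp_\delta, p) \;=\; 2 \int_{F \setminus E^\sharp_\delta} \frac{dy}{|p-y|^{n+2s}} \;\geq\; 0. $$
Consequently, once $H(E^\sharp_\delta, p) \ge 0 \ge H(F, p)$ has been established, the right-hand side is forced to vanish, so $|F \setminus E^\sharp_\delta|=0$, and the measure-theoretic interior convention of Appendix~\ref{9udfdtfd6yh665544} then gives $F = E^\sharp_\delta$.

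The inequality $H(E^\sharp_\delta, p) \ge 0$ is supplied by Corollary~\ref{SufT67HHJ7}, provided an exterior-touching ball at~$p$ is produced. This ball comes from the subconvolution structure of~$F$: writing $F^c = (E^c)^\sharp_\delta + \omega$ and applying Corollary~\ref{Cor-CONV} to~$E^c$ at $p - \omega \in \partial (E^c)^\sharp_\delta$, one obtains $y_o \in \partial E$ and $v' \in \R^n$ with $|v'|=\delta$ such that $p - \omega = y_o + v'$ and $B_\delta(y_o + \omega) \subseteq F^c \subseteq (E^\sharp_\delta)^c$ touches $E^\sharp_\delta$ from outside at~$p$. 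For the dual bound $H(F, p) \le 0$, use that $E^c$ is also $s$-minimal in~$\Omega$ by the symmetry of~$\per$ under complementation, and apply Lemma~\ref{AXr5yy} to~$E^c$ at $p - \omega$: after translation invariance and the identity $H((E^c)^\sharp_\delta,\cdot) = -H(E^\flat_\delta, \cdot)$, this reads $H(F, p) \le H(E, y_o)$.

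To bound $H(E, y_o) \le 0$, I would extract an interior-touching ball for~$E$ at~$y_o$. Corollary~\ref{Cor-CONV} at~$p$ yields $x_o \in \partial E$ and $v \in \R^n$ with $|v|=\delta$ and $B_\delta(x_o) \subseteq E^\sharp_\delta$, while Lemma~\ref{ConC1} applied to~$E^c$ gives $(E^c)^\sharp_\delta \supseteq E^c + v'$, hence $F \subseteq E + v' + \omega$. Chaining, $B_\delta(x_o - v' - \omega) \subseteq E$; and since $p = x_o + v = y_o + v' + \omega$, one has $x_o - v' - \omega = y_o - v$, so this ball is $B_\delta(y_o - v)$, which touches $\partial E$ precisely at~$y_o$. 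The viscosity Euler-Lagrange equation (Theorem~5.1 of~\cite{CRS}) for the $s$-minimal set~$E^c$ at~$y_o$ then yields $H(E^c, y_o) \ge 0$, equivalently $H(E, y_o) \le 0$, closing the comparison.

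The most delicate point, I expect, is the domain condition needed to invoke the Euler-Lagrange equation at~$y_o$: the hypothesis $\overline{B_\delta(p)} \subseteq \Omega$ places $x_o \in \overline{B_\delta(p)}$ inside~$\Omega$ for free, but the analogous containment for $y_o = p - \omega - v'$ requires $\overline{B_\delta(p - \omega)} \subseteq \Omega$, which must either be read off the geometric setup of Section~\ref{HY:08} in which Proposition~\ref{TOUCH:omega} is applied or arranged via an auxiliary observation on~$\omega$. Modulo this point, the proof is essentially an application of the interior/exterior touching ball principle in both directions, made rigorous through the sup/subconvolution duality developed in this section.
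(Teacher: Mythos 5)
Your argument is correct and mirrors the paper's strategy: establish the two inequalities $H(E^\sharp_\delta, p) \ge 0 \ge H(E^\flat_\delta + \omega, p)$ by producing touching balls at $p$ (resp.\ at $p-\omega$) and invoking the viscosity Euler--Lagrange equation through Corollary~\ref{SufT67HHJ7} and Lemma~\ref{AXr5yy}, then let the inclusion $E^\sharp_\delta \subseteq E^\flat_\delta + \omega$ force equality; your route to the second inequality simply unpacks Corollary~\ref{SufT67HHJ7} by hand, whereas the paper applies that corollary directly to $(\R^n\setminus E)^\sharp_\delta$. Your observation about the domain requirement at $p-\omega$ is well taken: the paper's application of Corollary~\ref{SufT67HHJ7} at $p-\omega$ silently uses $\overline{B_\delta(p-\omega)}\subseteq\Omega$, which is not among the stated hypotheses of Proposition~\ref{TOUCH:omega} but holds automatically in the intended application to Theorem~\ref{GRAPH}, where $\omega$ is a vertical translation and $\Omega$ is a vertical cylinder.
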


\begin{proof} Notice that
$$ p\in\partial
(E^\flat_\delta+\omega)
=\partial E^\flat_\delta +\omega =
\partial \big( (\R^n\setminus E)^\sharp_\delta\big) +\omega.$$
Accordingly, by the first claim in
Corollary \ref{Cor-CONV} (applied to the set~$\R^n\setminus E$
and to the point~$p-\omega$), we see that
there exist~$\tilde v\in\R^n$,
with~$|\tilde v|=\delta$, and~$\tilde x_o\in \partial (\R^n\setminus E)=\partial E$
such that~$p-\omega =\tilde x_o+\tilde v$
and~$B_\delta(\tilde x_o)\subseteq (\R^n\setminus E)^\sharp_\delta$.
That is, the set~$(\R^n\setminus E)^\sharp_\delta$
is touched from the inside at~$p-\omega$ by a ball of radius~$\delta$.
Taking the complementary set and translating by~$\omega$,
we obtain that~$E^\flat_\delta+\omega$ is touched
from the outside at~$p$ by a ball of radius~$\delta$.

Then, since~$E^\flat_\delta+\omega\supseteq E^\sharp_\delta$,
we obtain that also~$E^\sharp_\delta$
is touched
from the outside at~$p$ by a ball of radius~$\delta$.
Thus, making use of Corollary~\ref{SufT67HHJ7}, we deduce that
\begin{equation} \label{78hFFGK-094124}
\int_{\R^n} \frac{\chi_{E^\sharp_\delta}(y)-
\chi_{\R^n\setminus E^\sharp_\delta}(y)}{|p-y|^{n+2s}}\,dy
\ge0.\end{equation}
Moreover, by Corollary~\ref{Cor-CONV}, we know that~$
E^\sharp_\delta$ is touched from the inside at~$p$ by a ball of
radius~$\delta$.
By inclusion of sets, this gives that~$E^\flat_\delta+\omega$
is touched from the inside at~$p$ by a ball of
radius~$\delta$. Taking complementary sets,
we obtain that~$(\R^n\setminus E)^\sharp_\delta$
is touched from the outside at~$p-\omega$ by a ball of
radius~$\delta$.
Therefore, we can use
Corollary~\ref{SufT67HHJ7} (applied here to the set~$(\R^n\setminus E)^\sharp_\delta$),
and get that
\begin{eqnarray*}
&& 0 \le \int_{\R^n} \frac{\chi_{(\R^n\setminus E)^\sharp_\delta}(y)-
\chi_{\R^n\setminus \big( (\R^n\setminus E)^\sharp_\delta\big)}(y)}{
|p-\omega-y|^{n+2s}}\,dy\\
&&\qquad=
\int_{\R^n} \frac{\chi_{\R^n\setminus E^\flat_\delta} (y)-
\chi_{E^\flat_\delta}(y)}{
|p-\omega-y|^{n+2s}}\,dy
= -
\int_{\R^n} \frac{\chi_{E^\flat_\delta+\omega} (y)-
\chi_{\R^n\setminus (E^\flat_\delta+\omega)}(y)}{
|p-y|^{n+2s}}\,dy.\end{eqnarray*}
By comparing this estimate with the one in~\eqref{78hFFGK-094124},
we obtain that
$$ \int_{\R^n} \frac{\chi_{E^\sharp_\delta}(y)-
\chi_{\R^n\setminus E^\sharp_\delta}(y)}{|p-y|^{n+2s}}\,dy
\ge0\ge
\int_{\R^n} \frac{\chi_{E^\flat_\delta+\omega}(y)-
\chi_{\R^n\setminus (E^\flat_\delta+\omega)}(y)}{|p-y|^{n+2s}}\,dy
.$$
Since~$E^\sharp_\delta$ lies in~$E^\flat_\delta+\omega$,
the inequality above implies that the two sets must coincide.
\end{proof}

A useful variation of Proposition~\ref{TOUCH:omega}
consists in taking into account the possibility that the inclusion of the sets
only occurs inside a suitable domain.
For this, we define the cylinder
\begin{equation}\label{CYLINDER:DEF}
{\mathcal{C}}_R:= \{ x=(x',x_n)\in\R^n {\mbox{ s.t. }} |x'|<R\}.\end{equation}
Also, given $\eta>0$, we 
consider the set $\Omega_\eta$ of points which lie inside the domain
at distance greater than $\eta$ from the boundary, namely we
set
\begin{equation}\label{Oema}
\Omega_\eta := \{x\in\Omega {\mbox{ s.t. }} B_\eta(x)\subseteq\Omega\}.\end{equation}
Since $\Omega_o$ is open and bounded, 
we may suppose that $\Omega_o \subset B_{R_o}$, for some $R_o>0$,
hence $\Omega\subset{\mathcal{C}}_{R_o}$. So, for any $R>R_o$, we define
\begin{equation}\label{Oema2}
{\mathcal{D}}_{R,\eta}:=
{\mathcal{C}}_R \setminus (\Omega\setminus \Omega_{2\eta}) = \Omega_{2\eta}\cup
({\mathcal{C}}_R \setminus \Omega).
\end{equation}
With this notation, we have:

\begin{proposition}\label{TOUCH:omega-bis}
Let~$R>4(R_o+1)$ and~$\delta$, $\eta\in(0,1)$.
Let~$E$ be an $s$-minimal set in~$\Omega$.
Let~$p\in\partial E^\sharp_\delta $. Assume that
\begin{equation}\label{6yhcGGHJKoo}
\overline{B_{4(\delta+\eta)}(p)}
\subseteq\Omega_{4\eta}.\end{equation}
Assume also that $E^\sharp_\delta$ is touched in~${\mathcal{D}}_{R,\eta}$ from above
at~$p$ by a vertical
translation of~$E^\flat_\delta$, i.e. there exists~$\omega=(\omega',0)
\in\R^n$ such that~$E^\sharp_\delta\cap {\mathcal{D}}_{R,\eta}
\subseteq (E^\flat_\delta+\omega)\cap {\mathcal{D}}_{R,\eta}$
and~$p\in(\partial E^\sharp_\delta)\cap
\big(\partial(E^\flat_\delta+\omega)\big)$.

Then, for $\eta$ sufficiently small,
$$ \int_{{\mathcal{D}}_{R,\eta}} \frac{\chi_{(E^\flat_\delta+\omega)
\setminus E^\sharp_\delta}(y)-
\chi_{E^\sharp_\delta\setminus(E^\flat_\delta+\omega)
}(y)}{|p-y|^{n+2s}}\,dy\le C\left( R^{-2s} + \frac{
\eta}{
\big( {\rm dist}\,(p,\partial\Omega)
\big)^{n+2s}}\right),$$
for some~$C>0$, independent of~$\delta$, $\eta$ and~$R$.
\end{proposition}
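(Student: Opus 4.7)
The plan is to adapt the double-supconvolution argument of Proposition~\ref{TOUCH:omega} to the localized setting, using the hypothesis~\eqref{6yhcGGHJKoo} to force every tangent ball of radius~$\delta$ at~$p$ or~$p-\omega$ to sit inside $\mathcal{D}_{R,\eta}$. This allows the partial inclusion $E^\sharp_\delta \cap \mathcal{D}_{R,\eta} \subseteq (E^\flat_\delta+\omega) \cap \mathcal{D}_{R,\eta}$ to play the role of the global inclusion in Proposition~\ref{TOUCH:omega}, with an explicit error coming from outside $\mathcal{D}_{R,\eta}$.

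Concretely, I would apply Corollary~\ref{Cor-CONV} to $\R^n \setminus E$ at $p-\omega\in \partial(\R^n \setminus E)^\sharp_\delta$, then take complements and translate by~$\omega$, producing an exterior tangent ball $B_\delta(\tilde q)$ to $E^\flat_\delta+\omega$ at~$p$; by~\eqref{6yhcGGHJKoo} this ball lies in $B_{2\delta}(p) \subseteq \Omega_{4\eta} \subseteq \mathcal{D}_{R,\eta}$, and the partial inclusion forces it to be exterior to $E^\sharp_\delta$ as well. A symmetric argument, starting from the interior tangent ball to $E^\sharp_\delta$ at~$p$ supplied by Corollary~\ref{Cor-CONV}, yields an exterior tangent ball to $(\R^n \setminus E)^\sharp_\delta$ at $p-\omega$. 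Corollary~\ref{SufT67HHJ7} applied to~$E$ at~$p$ and to~$\R^n \setminus E$ at~$p-\omega$, together with translation invariance and complementation, then gives
$$
\int_{\R^n} \frac{\chi_{E^\sharp_\delta}(y) - \chi_{\R^n\setminus E^\sharp_\delta}(y)}{|p-y|^{n+2s}}\,dy \ge 0 \ge \int_{\R^n} \frac{\chi_{E^\flat_\delta+\omega}(y) - \chi_{\R^n\setminus(E^\flat_\delta+\omega)}(y)}{|p-y|^{n+2s}}\,dy.
$$
Subtracting and using $(\chi_A - \chi_{A^c}) - (\chi_B - \chi_{B^c}) = 2(\chi_{A\setminus B} - \chi_{B\setminus A})$ produces the global inequality
$$
\int_{\R^n} \frac{\chi_{(E^\flat_\delta+\omega)\setminus E^\sharp_\delta}(y) - \chi_{E^\sharp_\delta\setminus(E^\flat_\delta+\omega)}(y)}{|p-y|^{n+2s}}\,dy \le 0.
$$

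From here I would split the integral over $\mathcal{D}_{R,\eta}$ and its complement $(\R^n \setminus \mathcal{C}_R) \cup (\Omega \setminus \Omega_{2\eta})$, bounding the integrand off $\mathcal{D}_{R,\eta}$ trivially by $|p-y|^{-(n+2s)}$. For the far-field piece $\{|y'|\ge R\}$, using $p' \in B_{R_o}$ and $R > 4(R_o+1)$, I would integrate $y_n$ first via $\int_{\R}(a^2+t^2)^{-(n+2s)/2}\,dt = Ca^{1-(n+2s)}$ and then in the radial $(n-1)$-variable to extract the $O(R^{-2s})$ term. For the boundary strip $(\Omega_o \setminus (\Omega_o)_{2\eta}) \times \R$, the same $y_n$-reduction leaves $C\int_{\mathrm{strip}} |p'-y'|^{-(n+2s-1)}\,dy'$; using $|p'-y'| \ge \mathrm{dist}(p,\partial\Omega)/2$ (valid for $\eta$ small enough since $p \in \Omega_{4\eta}$), the $C^{1,1}$-regularity of $\partial\Omega_o$ (which yields $|\Omega_o \setminus (\Omega_o)_{2\eta}| \le C\eta$), and the boundedness $\mathrm{dist}(p,\partial\Omega) \le 2R_o$ together convert this into the desired $C\eta/\mathrm{dist}(p,\partial\Omega)^{n+2s}$.

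The delicate technical point I expect is in the second curvature inequality: verifying that Corollary~\ref{SufT67HHJ7} actually applies at $p-\omega$ for $\R^n \setminus E$, whose formal hypothesis $\overline{B_\delta(p-\omega)} \subseteq \Omega$ is not immediate from~\eqref{6yhcGGHJKoo} when $|\omega|$ is large. Handling this cleanly should require a closer look at where the associated tangent point on $\partial E$ lies, exploiting the horizontality $\omega_n = 0$ and the cylindrical structure $\Omega = \Omega_o \times \R$. Once this is in place the rest of the argument is a straightforward splitting plus tail estimate.
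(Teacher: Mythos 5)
Your proposal follows essentially the same route as the paper: two applications of Corollary~\ref{SufT67HHJ7} (to $E^\sharp_\delta$ at $p$ and to $(\R^n\setminus E)^\sharp_\delta$ at $p-\omega$, the latter after rewriting $E^\flat_\delta+\omega$ via complementation and translation) yield the sandwich $0\ge\int_{\R^n}\frac{\chi_{(E^\flat_\delta+\omega)\setminus E^\sharp_\delta}-\chi_{E^\sharp_\delta\setminus(E^\flat_\delta+\omega)}}{|p-y|^{n+2s}}\,dy$, after which the integral is split over $\mathcal{D}_{R,\eta}$ and its complement and the tail is controlled; your variant of the tail estimate (integrating $y_n$ first) is equivalent to the paper's (which bounds $|p-y|\ge R/2$ off $\mathcal{C}_R$ and splits the boundary strip according to $|p_n-y_n|\lessgtr 1$). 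The "delicate technical point" you flag is a genuine one that the paper does not address explicitly: Corollary~\ref{SufT67HHJ7} is invoked at $p-\omega$ without verifying $\overline{B_\delta(p-\omega)}\subseteq\Omega$. However, your proposed fix is phrased incorrectly: you invoke "the horizontality $\omega_n=0$," but a horizontal translation moves the cylinder $\Omega=\Omega_o\times\R$ sideways and does \emph{not} help. The resolution is that $\omega$ is actually a \emph{vertical} translation $\tau e_n$ — this is what the phrase "vertical translation" means, it is what the proof of Theorem~\ref{GRAPH} uses, and the $\omega=(\omega',0)$ in the statement should be read as a misprint for $\omega=(0',\omega_n)$. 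With $\omega$ vertical, $\Omega-\omega=\Omega$ by the cylinder structure, so $\overline{B_\delta(p-\omega)}\subseteq\Omega$ follows immediately from \eqref{6yhcGGHJKoo}, and the second application of Corollary~\ref{SufT67HHJ7} is legitimate. Once you state this correctly, your argument and the paper's coincide.
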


\begin{proof} The proof is a measure theoretic version 
of the one in Proposition~\ref{TOUCH:omega}. We give the full details
for the convenience of the reader.

Notice that
$$ p\in\partial
(E^\flat_\delta+\omega)
=\partial E^\flat_\delta +\omega =
\partial \big( (\R^n\setminus E)^\sharp_\delta\big) +\omega.$$
Accordingly, by the first claim in
Corollary \ref{Cor-CONV} (applied to the set~$\R^n\setminus E$
and to the point~$p-\omega$), we see that
there exist~$\tilde v\in\R^n$,
with~$|\tilde v|=\delta$, and~$\tilde x_o\in \partial (\R^n\setminus E)=\partial E$
such that~$p-\omega =\tilde x_o+\tilde v$
and~$B_\delta(\tilde x_o)\subseteq (\R^n\setminus E)^\sharp_\delta$.
That is, the set~$(\R^n\setminus E)^\sharp_\delta$
is touched from the inside at~$p-\omega$ by a ball of radius~$\delta$.
Taking the complementary set and translating by~$\omega$,
we obtain that~$E^\flat_\delta+\omega$ is touched
from the outside at~$p$ by a ball of radius~$\delta$.
Notice also that, in view of \eqref{6yhcGGHJKoo},
such ball lies in $\Omega_{4\eta}$, which in turn lies in ${\mathcal{D}}_{R,\eta}$.

Then, since~$(E^\flat_\delta+\omega)\cap{\mathcal{D}}_{R,\eta}
\supseteq E^\sharp_\delta\cap {\mathcal{D}}_{R,\eta}$,
we obtain that also~$E^\sharp_\delta$
is touched
from the outside at~$p$ by a ball of radius~$\delta$.
Thus, making use of Corollary~\ref{SufT67HHJ7}, we deduce that
\begin{equation} \label{78hFFGK-094124-BIS}
\int_{\R^n} \frac{\chi_{E^\sharp_\delta}(y)-
\chi_{\R^n\setminus E^\sharp_\delta}(y)}{|p-y|^{n+2s}}\,dy
\ge0.\end{equation}
Moreover, by Corollary~\ref{Cor-CONV}, we know that~$
E^\sharp_\delta$ is touched from the inside at~$p$ by a ball of
radius~$\delta$.
By inclusion of sets, this gives that~$E^\flat_\delta+\omega$
is touched from the inside at~$p$ by a ball of
radius~$\delta$. Taking complementary sets,
we obtain that~$(\R^n\setminus E)^\sharp_\delta$
is touched from the outside at~$p-\omega$ by a ball of
radius~$\delta$.
Therefore, we can use
Corollary~\ref{SufT67HHJ7} (applied here to the set~$(\R^n\setminus E)^\sharp_\delta$),
and get that
\begin{eqnarray*}
&& 0 \le \int_{\R^n} \frac{\chi_{(\R^n\setminus E)^\sharp_\delta}(y)-
\chi_{\R^n\setminus \big( (\R^n\setminus E)^\sharp_\delta\big)}(y)}{
|p-\omega-y|^{n+2s}}\,dy\\
&&\qquad=
\int_{\R^n} \frac{\chi_{\R^n\setminus E^\flat_\delta} (y)-
\chi_{E^\flat_\delta}(y)}{
|p-\omega-y|^{n+2s}}\,dy
= -
\int_{\R^n} \frac{\chi_{E^\flat_\delta+\omega} (y)-
\chi_{\R^n\setminus (E^\flat_\delta+\omega)}(y)}{
|p-y|^{n+2s}}\,dy.\end{eqnarray*}
By comparing this estimate with the one in~\eqref{78hFFGK-094124-BIS},
we obtain that
$$ \int_{\R^n} \frac{\chi_{E^\sharp_\delta}(y)-
\chi_{\R^n\setminus E^\sharp_\delta}(y)}{|p-y|^{n+2s}}\,dy
\ge0\ge
\int_{\R^n} \frac{\chi_{E^\flat_\delta+\omega}(y)-
\chi_{\R^n\setminus (E^\flat_\delta+\omega)}(y)}{|p-y|^{n+2s}}\,dy
.$$
Since~$E^\sharp_\delta\cap {\mathcal{D}}_{R,\eta}$ lies in~$(E^\flat_\delta+\omega)\cap {\mathcal{D}}_{R,\eta}$,
the inequality above implies that 
\begin{equation}\label{yu6schjh0}
\begin{split}
& \int_{{\mathcal{D}}_{R,\eta}} \frac{\chi_{(E^\flat_\delta+\omega)
\setminus E^\sharp_\delta}(y)-
\chi_{E^\sharp_\delta\setminus(E^\flat_\delta+\omega)
}(y)}{|p-y|^{n+2s}}\,dy
\le
2\int_{\R^n\setminus {\mathcal{D}}_{R,\eta}} \frac{dy}{|p-y|^{n+2s}}\\ &\qquad=
2\int_{\R^n\setminus {\mathcal{C}}_R} \frac{dy}{|p-y|^{n+2s}}
+
2\int_{\Omega\setminus \Omega_{2\eta}} \frac{dy}{|p-y|^{n+2s}}
.\end{split}\end{equation}
Notice now that,
if~$y\in\R^n\setminus {\mathcal{C}}_R$, then~$|p-y|\ge |p'-y'|\ge |y'|-|p'|\ge R-R_o \ge R/2$.
Hence changing variable~$\zeta:= p-y$, we have
\begin{equation}\label{yu6schjh1}
\int_{\R^n\setminus {\mathcal{C}}_R} \frac{dy}{|p-y|^{n+2s}}
\le \int_{\R^n\setminus B_{R/2}} \frac{d\zeta}{|\zeta|^{n+2s}}\le CR^{-2s},\end{equation}
for some $C>0$.
Moreover, using again \eqref{6yhcGGHJKoo},
we see that ${\rm dist}\,(p,\partial\Omega)\ge 4\eta$.
Hence, if $y\in \Omega\setminus\Omega_{2\eta}$, we have that
$$ |p-y|\ge {\rm dist}\,(p,\partial\Omega) -2\eta\ge \frac{
{\rm dist}\,(p,\partial\Omega)}{2}.$$
As a consequence,
\begin{equation}\label{7ytrfdgJJa}
\int_{\big(\Omega\setminus \Omega_{2\eta}\big)\cap \{ |p_n-y_n|\le1\}}
\frac{dy}{|p-y|^{n+2s}}
\le \frac{
C\,\eta}{
\big( {\rm dist}\,(p,\partial\Omega)
\big)^{n+2s}}.
\end{equation}
On the other hand,
\begin{equation*}\begin{split}&
\int_{\big(\Omega\setminus \Omega_{2\eta}\big)\cap \{ |p_n-y_n|>1\}}
\frac{dy}{|p-y|^{n+2s}}
\le 
\int_{\big(\Omega\setminus \Omega_{2\eta}\big)\cap \{ |p_n-y_n|>1\}}
\frac{dy}{|p_n-y_n|^{n+2s}} \\ &\qquad\le C\eta
\int_{\{ |p_n-y_n|>1\}}
\frac{dy_n}{|p_n-y_n|^{n+2s}} \le C\eta,\end{split}
\end{equation*}
for some $C>0$ (possibly different from step to step).
The latter estimate and \eqref{7ytrfdgJJa}
imply that
$$ \int_{\Omega\setminus \Omega_{2\eta}}
\frac{dy}{|p-y|^{n+2s}}
\le 
\frac{
C\,\eta}{
\big( {\rm dist}\,(p,\partial\Omega)
\big)^{n+2s}},$$
up to renaming $C$.
By inserting this and \eqref{yu6schjh1} into \eqref{yu6schjh0},
we obtain the desired result.
\end{proof}

\section{Auxiliary integral computations and a pointwise version
of the Euler-Lagrange equation}\label{KJ56DFfSE}

We collect here some technical results, which are used during
the proofs of the main results. First, 
we recall an explicit
estimate on the weighted measure of a set trapped between two
tangent balls.

\begin{lemma}\label{Trap}
For any~$R>0$ and~$\lambda\in(0,1]$, let
$$ P_{R,\lambda}:= \big\{x=(x',x_n)\in\R^n
{\mbox{ s.t. }} |x'|\le \lambda R {\mbox{ and }} |x_n|\le R-
\sqrt{R^2-|x'|^2} \big\}. $$
Then
$$ \int_{P_{R,\lambda}} \frac{dx}{|x|^{n+2s}}
\le \frac{CR^{-2s} \lambda^{1-2s}}{1-2s},$$
for some~$C>0$ only depending on~$n$.
\end{lemma}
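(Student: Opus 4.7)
The plan is to work in cylindrical coordinates $(r,\theta,x_n)$, with $r=|x'|\in[0,\lambda R]$, so that the slab structure of $P_{R,\lambda}$ becomes transparent. Writing $h(r):=R-\sqrt{R^2-r^2}$, the set is precisely $\{(x',x_n): |x'|\le \lambda R,\ |x_n|\le h(r)\}$, and the first task will be to control $h(r)$ sharply. From the algebraic identity
$$
h(r)=R-\sqrt{R^2-r^2}=\frac{r^2}{R+\sqrt{R^2-r^2}}\le \frac{r^2}{R},
$$
I obtain both that $h(r)\le r^2/R$ and, since $\lambda\le 1$ forces $r\le R$, that $h(r)\le r$. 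The latter is the key geometric observation: throughout $P_{R,\lambda}$ one has $|x_n|\le r$, hence $|x|\ge r$, so $|x|^{-(n+2s)}\le r^{-(n+2s)}$ can be used with no loss.

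Next I would slice the integral in the $x_n$ direction. Using the pointwise bound on $|x|^{-(n+2s)}$ above,
$$
\int_{-h(r)}^{h(r)}\frac{dx_n}{|x|^{n+2s}} \le \frac{2h(r)}{r^{n+2s}} \le \frac{2\,r^{2-n-2s}}{R}.
$$
Then the integration over $x'$ in spherical coordinates (Jacobian $r^{n-2}$, times the measure of $S^{n-2}$, which I will absorb into a dimensional constant $C$) reduces everything to
$$
\int_{P_{R,\lambda}}\frac{dx}{|x|^{n+2s}}\le \frac{C}{R}\int_{0}^{\lambda R} r^{-2s}\,dr=\frac{C}{R}\cdot\frac{(\lambda R)^{1-2s}}{1-2s}=\frac{C\,R^{-2s}\lambda^{1-2s}}{1-2s},
$$
which is exactly the claim.

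There is really no conceptual obstacle here: once one spots the elementary inequality $h(r)\le r^2/R$, which exactly matches the scaling of two tangent balls of radius $R$ near their common tangency point, everything else is a one-variable computation in $r$. The only point that deserves care is tracking where the factor $(1-2s)^{-1}$ appears: it comes from the integrability of $r^{-2s}$ near zero on $[0,\lambda R]$, and this is sharp precisely because $P_{R,\lambda}$ pinches to the origin like $|x_n|\sim r^2/R$, so that the singularity of the kernel is felt fully along the $x_n$-integration but only with weight $r^{-2s}$ along the radial one.
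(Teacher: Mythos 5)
Your proof is correct and follows essentially the same route as the paper's: bound the kernel by $|x|^{-(n+2s)}\le |x'|^{-(n+2s)}$, integrate out $x_n$ using $R-\sqrt{R^2-r^2}\le Cr^2/R$, then integrate radially in $\R^{n-1}$ to get $\int_0^{\lambda R} r^{-2s}\,dr$. The paper simply scales to $R=1$ first, whereas you carry $R$ through the computation; this is cosmetic. One small remark on your exposition: the implication "$|x_n|\le r$, hence $|x|\ge r$" is worded as though the first fact implies the second, but $|x|\ge|x'|=r$ holds trivially; what $|x_n|\le r$ actually buys is the upper bound $|x|\le\sqrt{2}\,r$, which is what justifies the phrase "with no loss."
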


\begin{proof} By scaling~$y:=x/R$, we see that
$$ \int_{P_{R,\lambda}} \frac{dx}{|x|^{n+2s}}
= R^{-2s} \int_{P_{1,\lambda}} \frac{dy}{|y|^{n+2s}},$$
so it is enough to prove the desired claim for~$R=1$.

To this goal, we observe that, if~$\rho\in [0,1]$ then
$$ 1-\sqrt{1-\rho^2}\le C\rho^2,$$
for some~$C>0$ (independent of~$n$ and~$s$).
Therefore
\begin{equation}\label{90IOPP}
\int_0^\lambda \frac{1-\sqrt{1-\rho^2}}{\rho^{2+2s}}\,d\rho
\le \frac{C \lambda^{1-2s}}{1-2s},\end{equation}
up to renaming~$C>0$.

In addition, using polar coordinates in~$\R^{n-1}$
(and possibly renaming constants which only depend on~$n$),
we have
\begin{eqnarray*}
&& \int_{P_{1,\lambda}} \frac{dx}{|x|^{n+2s}}
\le \int_{P_{1,\lambda}} \frac{dx}{|x'|^{n+2s}}
=C\int_{\{ |x'|\le\lambda\}}
\left( \int_0^{1-\sqrt{1-|x'|^2}} \frac{dx_n}{|x'|^{n+2s}}\right)\,dx'\\
&&\qquad
=C\int_{\{ |x'|\le\lambda\}}
\frac{1-\sqrt{1-|x'|^2}}{|x'|^{n+2s}} \,dx'
=C\int_0^\lambda
\frac{1-\sqrt{1-\rho^2}}{\rho^{2+2s}} \,d\rho.\end{eqnarray*}
This and~\eqref{90IOPP} yield the desired result.
\end{proof}

A variation of Lemma~\ref{Trap} deals with the
case of trapping between two hypersurfaces, as stated in the following result:

\begin{lemma}\label{TRAP-C1}
Let~$C_o>0$ and~$\alpha>2s$.
For any~$L>0$, let
$$ P_{L}:= \big\{x=(x',x_n)\in\R^n
{\mbox{ s.t. }} |x'|\le L {\mbox{ and }} |x_n|\le C_o\,|x'|^{1+\alpha}\big\}. $$
Then
$$ \int_{P_{L}} \frac{dx}{|x|^{n+2s}}
\le \frac{C\,C_o\,L^{\alpha-2s } 
}{\alpha-2s},$$
for some~$C>0$ only depending on~$n$.
\end{lemma}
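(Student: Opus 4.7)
The plan is to mirror the argument used for Lemma~\ref{Trap}, with the parabolic trapping profile $|x_n|\le R - \sqrt{R^2-|x'|^2}\sim C|x'|^2$ replaced by the more general polynomial profile $|x_n|\le C_o|x'|^{1+\alpha}$. The key geometric fact is that on $P_L$ the integrand $|x|^{-(n+2s)}$ is controlled by the cheaper quantity $|x'|^{-(n+2s)}$, because $|x|\ge |x'|$. So the first step I would take is the pointwise bound
\begin{equation*}
\int_{P_L}\frac{dx}{|x|^{n+2s}}\le \int_{P_L}\frac{dx}{|x'|^{n+2s}}.
\end{equation*}

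Second, I would apply Fubini, integrating the $x_n$ variable first over the slab $\{|x_n|\le C_o|x'|^{1+\alpha}\}$. This contributes a factor of $2C_o|x'|^{1+\alpha}$, leaving an $(n-1)$-dimensional integral over $\{|x'|\le L\}\subset\R^{n-1}$ of $2C_o|x'|^{1+\alpha-n-2s}$.

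Third, passing to polar coordinates in $\R^{n-1}$ (which produces a Jacobian $\rho^{n-2}$ and an irrelevant angular factor $|S^{n-2}|$ depending only on $n$) reduces the problem to the one-dimensional integral
\begin{equation*}
C\,C_o\int_0^L \rho^{\alpha-2s-1}\,d\rho \;=\;\frac{C\,C_o\,L^{\alpha-2s}}{\alpha-2s},
\end{equation*}
where the assumption $\alpha>2s$ is exactly what guarantees integrability at the origin and produces the denominator $\alpha-2s$ in the final bound. There is no real obstacle here; the only mildly delicate point is making sure the exponent bookkeeping works out so that the bound $|x|\ge|x'|$ is sharp enough, and that the constant $C$ genuinely depends only on $n$ (through $|S^{n-2}|$ and the factor $2$), which is transparent from the computation.
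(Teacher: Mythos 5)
Your proposal is correct and follows exactly the same route as the paper: bound $|x|^{-(n+2s)}\le|x'|^{-(n+2s)}$, integrate out $x_n$ over the slab to pick up a factor $2C_o|x'|^{1+\alpha}$, and then pass to polar coordinates in $\R^{n-1}$, where $\alpha>2s$ ensures convergence and yields the stated bound. Nothing to add.
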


\begin{proof} Using polar coordinates in~$\R^{n-1}$, we have
\begin{equation*}
\int_{P_{L}} \frac{dx}{|x|^{n+2s}}\le\int_{\{|x'|\le L\}}
\left( \int_{\{ |x_n|\le C_o\,|x'|^{1+\alpha}\}} \frac{dx_n}{|x'|^{n+2s}}
\right)\,dx'
=
\int_{\{|x'|\le L\}}
\frac{2C_o\,|x'|^{1+\alpha}}{|x'|^{n+2s}}\,dx'
= \frac{C\,C_o\,L^{\alpha-2s} }{\alpha-2s},
\end{equation*}
for some~$C>0$.
\end{proof}

Now we show that
an $s$-minimal set does not have spikes going to infinity:

\begin{lemma}\label{1212}
Let~$\Omega_o$ be an open and bounded subset
of~$\R^{n-1}$ 
and let~$\Omega:=\Omega_o\times\R$.
Let~$E$ be an $s$-minimal set in~$\Omega$.

Assume that 
\begin{equation}\label{jnon8a9} 
E \setminus\Omega\, \subseteq\, \{x_n \le v(x')\},
\end{equation} 
for some~$v:\R^{n-1}\to\R$, and that, for any~$R>0$,
$$ M_R:=\sup_{|x'|\le R} v(x') <+\infty.$$
Then
\begin{equation*}
E\cap \Omega\,
\subseteq\,\{x_n\le M\}
\end{equation*}
for some~$M\in\R$ (which may depend on $s$, $n$, $\Omega_o$
and~$v$).
\end{lemma}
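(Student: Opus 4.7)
The plan is to argue by contradiction: I assume that $E\cap\Omega$ is unbounded from above, and show that this forces $\per(E,\Omega)=+\infty$, violating $s$-minimality. The key idea is to exploit the local boundedness of the exterior datum $v$ to produce a large portion of $E^c$ just outside $\Omega$ at arbitrarily high altitudes, which yields a uniform positive lower bound for the nonlocal ``potential'' $\int_{E^c}|x-y|^{-n-2s}\,dy$ as seen from every sufficiently high-altitude point $x\in E\cap\Omega$.

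Concretely, I fix $R_o>0$ with $\Omega_o\subset B_{R_o}$ (the ball in $\R^{n-1}$) and set $V:=M_{R_o+1}+1$. The bound $v\le M_{R_o+1}$ on $\{|y'|\le R_o+1\}$, combined with \eqref{jnon8a9}, guarantees that the test region
$$ S:=\{y\in\R^n \,:\, y'\in B_{R_o+1}\setminus\Omega_o,\ y_n>V\} $$
is entirely contained in $E^c$. A direct computation---integrating first in $y_n$ (which, as $x_n\to+\infty$, is comparable to the full line integral $\sim|x'-y'|^{1-n-2s}$) and then in $y'\in B_{R_o+1}\setminus\Omega_o$, using that $|x'-y'|\le 2R_o+1$ throughout and that $B_{R_o+1}\setminus\Omega_o$ has strictly positive $(n-1)$-dimensional Lebesgue measure---produces a uniform lower bound
$$ \int_S\frac{dy}{|x-y|^{n+2s}}\,\ge\,c_0\,>\,0 $$
for every $x=(x',x_n)\in\Omega$ with $x_n\ge 2V$, where $c_0$ depends only on $n$, $s$, $\Omega_o$, and $v$. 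Integrating this pointwise estimate over $x\in E\cap\Omega\cap\{x_n\ge 2V\}$ and using the standing bound $L(E\cap\Omega,E^c)\le\per(E,\Omega)<+\infty$ gives the finite-measure bound
$$ |E\cap\Omega\cap\{x_n\ge 2V\}|\,<\,+\infty. $$

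To upgrade this into the desired height bound, I invoke the interior uniform density estimate for $s$-minimal sets (see, e.g., \cite{CRS,SV-REG}). If $\partial E\cap\Omega$ is bounded above by some $T\ge 2V$, then every connected component of $\Omega\cap\{x_n>T\}$ is entirely in $E$ or entirely in $E^c$; since $E\cap\Omega$ is unbounded above, at least one such component lies in $E$, which gives infinite measure above $2V$ and contradicts the previous bound. Otherwise there exists a sequence $p_k\in\partial E\cap\Omega$ with $p_{k,n}\to+\infty$, and the density estimate yields $|E\cap B_{r_k}(p_k)|\ge c\,r_k^n$ with $r_k:=\min(r_0,\mathrm{dist}(p_k',\partial\Omega_o))$; extracting a sparse subsequence so that the balls $B_{r_k}(p_k)$ are pairwise disjoint, one then has $\sum_k c\,r_k^n\le|E\cap\Omega\cap\{x_n\ge 2V\}|<+\infty$. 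The main obstacle I expect is the ``boundary-spike'' alternative $p_k'\to\partial\Omega_o$, $r_k\to 0$, in which the summability does not immediately contradict anything; I would rule it out by applying the viscosity Euler--Lagrange inequality of Corollary~\ref{SufT67HHJ7} to suitable supconvolutions $E^\sharp_\delta$ of $E$ and using once more the local boundedness of $v$ to produce exterior touching balls near each $p_k$, thereby forcing $r_k$ to stay uniformly bounded below. Once this is achieved, $\sum_k r_k^n=+\infty$ delivers the desired contradiction.
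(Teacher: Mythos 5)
Your approach is genuinely different from the paper's, and the first half of it is correct, but the second half has a gap that you yourself flag and that I do not think can be repaired in the way you sketch.

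What you do right: the nonlocal ``potential'' estimate is sound. Fix $R_o$ with $\Omega_o\subset B_{R_o}$, set $V$ larger than $M_{R_o+1}$, and let $S=(B_{R_o+1}\setminus\Omega_o)\times(V,\infty)$. By~\eqref{jnon8a9}, $S\subseteq E^c$. For $x\in\Omega$ with $x_n\ge 2V$ and $y\in S$ one has $|x'-y'|\le 2R_o+1$, and the $y_n$-integral over $(V,\infty)$ is bounded below uniformly in $x_n$, so indeed $\int_S|x-y|^{-n-2s}\,dy\ge c_0>0$. Combining this with $L(E\cap\Omega,E^c)\le\per(E,\Omega)<+\infty$ gives $|E\cap\Omega\cap\{x_n\ge 2V\}|<+\infty$. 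All fine.

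The gap is in the upgrade from finite measure to a height bound. You correctly identify the dangerous scenario: points $p_k\in(\partial E)\cap\Omega$ with $p_{k,n}\to+\infty$ and $\mathrm{dist}(p_k',\partial\Omega_o)\to 0$, so that the interior density estimate of~\cite{CRS} only applies at shrinking scales $r_k\to 0$ and the summability $\sum r_k^n<\infty$ is not a contradiction. Your proposed fix --- applying Corollary~\ref{SufT67HHJ7} to the supconvolution $E^\sharp_\delta$ near each $p_k$ to force a uniform lower bound on $r_k$ --- cannot work as stated: Corollary~\ref{SufT67HHJ7} requires $\overline{B_\delta(p)}\subseteq\Omega$, which is exactly what fails when $p_k$ approaches the lateral boundary, and ``exterior touching balls produced from the local boundedness of $v$'' is not a quantitative statement near a lateral spike (the exterior datum is far below, so it does not furnish a touching ball at height $p_{k,n}$). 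In short, the boundary-spike case is not ruled out.

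By contrast, the paper's proof avoids the measure bound and the density estimate entirely. It slides a ball of radius $R/2$ at height $t\ge 2M_{5R}+2R$ in the $e_1$ direction until it touches $\partial E$ at some $p\in(\partial B)\cap(\partial E)\cap\overline{\Omega}$ (interior or lateral, it does not matter). It then estimates the nonlocal mean curvature at $p$ from below: the contribution of the tangent-ball region is $O(R^{-2s})$ by Lemma~\ref{Trap}, the contributions of $B$ and its reflection $B'$ cancel by symmetry, and a \emph{fixed} unit ball $B_\star=B_1(p+(2R_o+2)e_1)$, which lies in $E^c\setminus\Omega$ at a bounded distance from $p$, contributes a fixed positive amount $c$. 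For $R$ large this makes the viscosity Euler--Lagrange inequality fail, and one concludes $E\subset\{x_n\le 2M_{5R}+\tfrac{3}{2}R\}$. This argument is both sharper (it gives an explicit height bound) and immune to the boundary-spike issue, because the positive contribution from $B_\star$ is produced by the geometry of the cylinder at the touching point itself, not by an interior density estimate. If you want to salvage your route, you would need a substitute for the density estimate that degenerates no faster than linearly near $\partial\Omega$; the cleaner path is the sliding-ball contradiction.
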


\begin{proof} Assume that~$\Omega_o\subseteq \{|x'|<R_o\}$,
for some~$R_o$ and let~$R>R_o+1$, to be chosen suitably large.
We show that
\begin{equation}\label{TG:1}
E\subseteq \left\{ x_n\le 2M_{5R}+\frac{3}{2}R\right\}.\end{equation}
For any~$t\ge 2M_{5R} +2R$ we slide a ball centered at~$\{x_n=t\}$
of radius~$R/2$
``from left to right''. For this, we observe that
\begin{equation}\label{gTGhG:87}
B_{R/2} (-2R,0,\dots,0,t)\subseteq E^c.
\end{equation}
Indeed, if~$x\in B_{R/2} (-2R,0,\dots,0,t)$, then
$$ \big| |x'|-2R\big| = \big| |x'|-|(-2R,0,\dots,0)|\big|
\le \big| x'-(-2R,0,\dots,0)\big|
\le \big| x-(-2R,0,\dots,0,t)\big|\le\frac{R}2.$$
In particular, 
$$ |x'|\in (R,\, 3R).$$
In addition,
$$ x_n \ge t-\frac{R}2 \ge 2M_{5R} +2R -\frac{R}2>2M_{5R} \ge v(x').$$
These considerations and~\eqref{jnon8a9} imply that~$x\in E^c$,
thus establishing~\eqref{gTGhG:87}.

As a consequence of~\eqref{gTGhG:87}, we can slide the ball~$
B_{R/2} (-2R,0,\dots,0,t)$
in direction~$e_1$ till it touches~$\partial E$. Notice that if no touching occurs
for any~$t$, then~\eqref{TG:1} holds true and we are done.
So we assume, by contradiction, that there exists~$t
\ge2M_{5R} +2R$ for which a touching occurs, namely
there exists a ball~$B:=B_{R/2} (\rho,0,\dots,0,t)$ for some~$\rho\in[ -2R,2R]$
such that
\begin{equation}\label{POgy5678}
B\subset E^c\end{equation} and there exists~$p\in (\partial B)\cap(\partial E)\cap\overline\Omega$.

Let now~$B'$ be the ball symmetric to~$B$ with respect to~$p$,
and let~$K$ be the convex envelope of~$B\cup B'$.

Notice that if~$x\in B'$ then~$x_n \ge t-\frac{3}{2}R\ge2M_{5R} +\frac{R}{2}
>2M_{5R}$. That is, $B\cup B'\subseteq \{x_n >2M_{5R}\}$ and so, by convexity
\begin{equation}\label{hj78HaJ}
K\subseteq \{x_n >2M_{5R}\}.\end{equation}
Now we claim that
\begin{equation}\label{BIS-hj78HaJ}
K\subseteq \{x_n > v(x')\}.\end{equation}
Indeed, if~$x\in K$ then~$|x'|\le \rho+2R\le 4R$, hence~\eqref{BIS-hj78HaJ}
follows from~\eqref{hj78HaJ}.

{F}rom \eqref{jnon8a9}
and~\eqref{BIS-hj78HaJ} we conclude that
\begin{equation}\label{TRIS-hj78HaJ}
K\setminus\Omega \subseteq E^c.\end{equation}
Now define~$B_\star:= B_1\big(p+(2R_o+2)e_1\big)$ and
we observe that
\begin{equation}\label{QUATRIS-hj78HaJ}
B_\star \subseteq \Omega^c.\end{equation}
Indeed, if~$x\in B_\star$, then
\begin{eqnarray*}
&& |x'|\ge \big| \big(p'+(2R_o+2)e_1\big)\big|-
\big|x' - \big(p'+(2R_o+2)e_1\big)\big|\\ &&\qquad\ge
2R_o+2 -|p'| -\big|x- \big(p+(2R_o+2)e_1\big)\big|
\ge 2R_o+2-R_o -1>R_o,\end{eqnarray*}
which proves~\eqref{QUATRIS-hj78HaJ}.

Now we check that
\begin{equation}\label{QUINRIS-hj78HaJ}
B_\star \subseteq K.\end{equation}
Indeed, 
\begin{equation}\label{QU8UFGhh}
{\mbox{if~$x\in B_\star$, then~$|x-p|\le 2R_o+3$,}}
\end{equation}
and so in particular~$|x-p|
<\frac{R}{4}$
if~$R$ is large enough, and this proves~\eqref{QUINRIS-hj78HaJ}.

In light of~\eqref{TRIS-hj78HaJ}, \eqref{QUATRIS-hj78HaJ}
and~\eqref{QUINRIS-hj78HaJ}, we have that
\begin{equation}\label{SEX-QUINRIS-hj78HaJ}
B_\star \subseteq K\cap\Omega^c=K\setminus\Omega\subseteq E^c.\end{equation}
Also, since we have slided the balls from left to right, we have that~$B_\star$
is on the right of~$B$ and hence it lies outside~$B$.
Hence, \eqref{QUINRIS-hj78HaJ}
can be precised by saying that~$B_\star\subseteq K\setminus B$. 

Thus, as a consequence of \eqref{POgy5678} and \eqref{SEX-QUINRIS-hj78HaJ},
\begin{eqnarray*}
&& \int_{K} \frac{\chi_{E^c}(y)- \chi_{E}(y)}{|p-y|^{n+2s}}\,dy
=\int_{K\setminus B_\star} \frac{\chi_{E^c}(y)- \chi_{E}(y)}{|p-y|^{n+2s}}\,dy
+\int_{B_\star} \frac{dy}{|p-y|^{n+2s}}
\\ &&\qquad\ge
\int_{B} \frac{dy}{|p-y|^{n+2s}}-
\int_{K\setminus(B\cup B_\star)} \frac{dy}{|p-y|^{n+2s}}
+\int_{B_\star} \frac{dy}{|p-y|^{n+2s}} 
\\ &&\qquad\ge
\int_{B} \frac{dy}{|p-y|^{n+2s}}-   
\int_{K\setminus B} \frac{dy}{|p-y|^{n+2s}}
+\int_{B_\star} \frac{dy}{|p-y|^{n+2s}},
\end{eqnarray*}
in the principal value sense. Hence, the contributions in~$B$ and~$B'$
cancel out by symmetry and, in virtue of Lemma~\ref{Trap}
(used here with~$\lambda:=1$),
we obtain that
$$ \int_{K} \frac{\chi_{E^c}(y)- \chi_{E}(y)}{|p-y|^{n+2s}}\,dy
\ge -CR^{-2s} +\int_{B_\star} \frac{dy}{|p-y|^{n+2s}},$$
up to renaming~$C>0$.
Now if~$y\in B_\star$ we have that~$|p-y|\le 2R_o+3\le C$,
for some~$C>0$, thanks to~\eqref{QU8UFGhh}. Also,
if~$y\in \R^n\setminus K$ then~$|p-y|\ge R/4$.
As a consequence, up to renaming~$C>c>0$ step by step,
\begin{eqnarray*}
&& \int_{\R^n} \frac{\chi_{E^c}(y)- \chi_{E}(y)}{|p-y|^{n+2s}}\,dy\ge
\int_{K} \frac{\chi_{E^c}(y)- \chi_{E}(y)}{|p-y|^{n+2s}}\,dy -CR^{-2s}
\\ &&\qquad \ge
-CR^{-2s} +\int_{B_\star} \frac{dy}{|p-y|^{n+2s}}\ge -CR^{-2s}+
c\,|B_\star|\ge -CR^{-2s} +c,
\end{eqnarray*}
which is strictly positive if~$R$ is large enough.
This is in contradiction with the 
Euler-Lagrange equation in the viscosity sense
(see Theorem~5.1 in~\cite{CRS}) and so it proves~\eqref{TG:1}.
\end{proof}

Next result gives the continuity of the fractional mean
curvature at the smooth points of the boundary:

\begin{lemma}\label{CONT:lemma:curvature}
Let
\begin{equation}\label{HG:ALP}
\alpha\in (2s,1].\end{equation}
Let~$E\subseteq\R^n$ and~$x_o\in \partial E$.
Assume that~$(\partial E)\cap B_R(x_o)$ is of class~$C^{1,\alpha}$,
for some~$R>0$. Then
$$ \lim_{{x\to x_o}\atop{x\in\partial E}}
\int_{\R^n} \frac{\chi_{E^c}(y)- \chi_{E}(y)}{|x-y|^{n+2s}}\,dy
=\int_{\R^n} \frac{\chi_{E^c}(y)- \chi_{E}(y)}{|x_o-y|^{n+2s}}\,dy.$$
\end{lemma}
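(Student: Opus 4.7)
The plan is to split the integral into a near part on a small ball $B_r(x)$ and a far part on its complement, bounding the first via the $C^{1,\alpha}$ graph representation of $\partial E$ and Lemma~\ref{TRAP-C1}, and handling the second by dominated convergence. More precisely, for $x\in(\partial E)\cap B_{r/4}(x_o)$ I would write
\begin{equation*}
\mathcal{I}(x):=\int_{\R^n}\frac{\chi_{E^c}(y)-\chi_{E}(y)}{|x-y|^{n+2s}}\,dy=\mathcal{I}_r^{\rm near}(x)+\mathcal{I}_r^{\rm far}(x),
\end{equation*}
with $\mathcal{I}_r^{\rm near}(x)$ the principal-value integral over $B_r(x)$ and $\mathcal{I}_r^{\rm far}(x)$ the integral over $\R^n\setminus B_r(x)$.

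For $\mathcal{I}_r^{\rm near}(x)$, since $\partial E$ is $C^{1,\alpha}$ in $B_R(x_o)$, in orthonormal coordinates centered at $x$ with the inward normal as $e_n$ one has $(\partial E)\cap B_r(x)=\{y_n=\phi_x(y')\}$ with $\phi_x(0)=0$, $\nabla\phi_x(0)=0$, and $[\nabla\phi_x]_{C^\alpha}\le C_o$ uniformly in $x$ (for $r$ small, depending only on the $C^{1,\alpha}$ datum of $\partial E$ near $x_o$). Hence $|\phi_x(y')|\le C_o|y'|^{1+\alpha}$. Since the halfspace integrand $\chi_{\{y_n>0\}}-\chi_{\{y_n<0\}}$ is odd in $y_n$ its principal value on $B_r$ vanishes, and the symmetric difference between $E\cap B_r(x)$ and $\{y_n<0\}\cap B_r$ is contained in the trap region $P_r$ of Lemma~\ref{TRAP-C1} with $L=r$. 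Therefore
\begin{equation*}
|\mathcal{I}_r^{\rm near}(x)|\le 2\int_{P_r}\frac{dy}{|y|^{n+2s}}\le \frac{C\,C_o\,r^{\alpha-2s}}{\alpha-2s},
\end{equation*}
uniformly in $x$, where the hypothesis $\alpha>2s$ from~\eqref{HG:ALP} is essential to make this quantity arbitrarily small as $r\to 0$.

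For $\mathcal{I}_r^{\rm far}(x)$, when $|x-x_o|\le r/4$ and $y\notin B_r(x)$ one has $|x-y|\ge 3r/4$; comparing the varying domain $\R^n\setminus B_r(x)$ with the fixed $\R^n\setminus B_r(x_o)$ (their symmetric difference is a thin shell whose integrated contribution is $O(|x-x_o|)$) and applying dominated convergence on the common part yields $\lim_{x\to x_o}\mathcal{I}_r^{\rm far}(x)=\mathcal{I}_r^{\rm far}(x_o)$. To conclude, given $\eps>0$ one first chooses $r$ so small that $|\mathcal{I}_r^{\rm near}(x)|\le\eps/3$ uniformly in $x\in(\partial E)\cap B_{r/4}(x_o)$ (including $x=x_o$), and then takes $x$ close enough to $x_o$ so that $|\mathcal{I}_r^{\rm far}(x)-\mathcal{I}_r^{\rm far}(x_o)|\le\eps/3$.

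The main delicate point is handling the principal-value singularity of the kernel at the moving point $x$; this is resolved by anchoring the antisymmetric reference halfspace at $x$ itself rather than at $x_o$, which makes the trap estimate from Lemma~\ref{TRAP-C1} translation-invariant and uniform as $x\to x_o$ along $\partial E$.
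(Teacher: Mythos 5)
Your proof is correct, and it follows a genuinely different route from the paper. The paper's argument parametrizes $\partial E$ near $x_o$ as a graph and invokes the representation formulas~(49)--(50) of~\cite{BEGO} to rewrite the nonlocal mean curvature as an absolutely convergent integral of a difference quotient of the graph function; after subtracting the term $F(\nabla u(x')\cdot y'/|y'|)$ (which has zero principal value) the integrand is dominated by $C|y'|^{\alpha-2s}/|y'|^{n-1}\in L^1_{\rm loc}$ thanks to $\alpha>2s$, and dominated convergence finishes the job. Your argument instead splits into a near ball $B_r(x)$ and its complement: you anchor an antisymmetric reference halfspace at the moving point $x$ (so its principal value vanishes exactly), control the near part uniformly by a trap-region estimate $\le C\,C_o\,r^{\alpha-2s}/(\alpha-2s)$ via Lemma~\ref{TRAP-C1}, and handle the far part by dominated convergence plus an $O(|x-x_o|)$ shell estimate. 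Both arguments rely on the same two mechanisms -- cancellation against an odd reference and the strict inequality $\alpha>2s$ -- but yours is more self-contained (it avoids the external formula from~\cite{BEGO} and uses only Lemma~\ref{TRAP-C1}), while the paper's is shorter given that it already cites~\cite{BEGO} elsewhere and naturally sets up the same decomposition that is reused in the proof of Lemma~\ref{lemma36}. One small point worth making explicit in your write-up: the uniformity of the $C^{1,\alpha}$ bound on $\phi_x$ for $x$ near $x_o$ (after rotating so that the inward normal at $x$ is $e_n$) follows from the fact that the normal direction depends continuously on $x$ on the $C^{1,\alpha}$ surface, so the rotated graph representations have uniformly controlled $C^{1,\alpha}$ seminorm on a fixed small ball.
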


\begin{proof} Up to a rigid motion, we suppose that~$x_o=0$
and that, in the vicinity of the origin, the set~$E$
is the subgraph of a function~$u\in C^{1,\alpha}(\R^{n-1})$
with~$u(0)=0$ and~$\nabla u(0)=0$. By formulas~(49) and~(50)
in~\cite{BEGO}, we can write the 
fractional mean
curvature in terms of~$u$, as long as~$|x'|$ is small enough.
More precisely, there exist an odd and smooth functions~$F$,
with~$F(0)=0$, $|F|+|F'|\le C$, for some~$C>0$,
a function~$\Psi\in C^{1,\alpha}(\R^{n-1})$,
and a smooth, radial and compactly supported function~$\zeta$ such that,
if~$|x'|$ is small and~$x_n=u(x')$, 
$$ \int_{\R^n} \frac{\chi_{E^c}(y)- \chi_{E}(y)}{|x-y|^{n+2s}}\,dy
=\int_{\R^{n-1}} F\left( \frac{u(x'+y')-u(x')}{|y'|}\right)\,
\frac{\zeta(y')}{|y'|^{n-1+2s}}\,dy'+\Psi(x'),$$
in the principal value sense.
Since also, by symmetry,
$$ \int_{\R^{n-1}} F\left( \frac{\nabla u(x')\cdot y'}{|y'|}\right)\,
\frac{\zeta(y')}{|y'|^{n-1+2s}}\,dy'=0$$
in the principal value sense, we write
\begin{equation}\label{KJFR589}
\int_{\R^n} \frac{\chi_{E^c}(y)- \chi_{E}(y)}{|x-y|^{n+2s}}\,dy
=\int_{\R^{n-1}} \left[
F\left( \frac{u(x'+y')-u(x')}{|y'|}\right)
-F\left( \frac{\nabla u(x')\cdot y'}{|y'|}\right)
\right]\,
\frac{\zeta(y')}{|y'|^{n-1+2s}}\,dy'+\Psi(x').\end{equation}
So we define
$$ G(x',y'):=
\left[F\left( \frac{u(x'+y')-u(x')}{|y'|}\right)
-F\left( \frac{\nabla u(x')\cdot y'}{|y'|}\right)\right]
\,
\frac{\zeta(y')}{|y'|^{n-1+2s}} .$$
Notice that
$$ \lim_{x'\to 0} G(x',y') = G(0,y').$$
Also, for any small~$|x'|$ and bounded~$|y'|$,
$$ \left| F\left( \frac{u(x'+y')-u(x')}{|y'|}\right)
-F\left( \frac{\nabla u(x')\cdot y'}{|y'|}\right)\right|\le
C\,\frac{|u(x'+y')-u(x')
-\nabla u(x')\cdot y'|}{|y'|} \le C\,|y'|^\alpha.$$
Therefore
$$ |G(x',y')|\le \frac{C}{|y'|^{n-1-\alpha+2s}}
\in L^1_{{\rm loc}}(\R^{n-1}),$$
thanks to~\eqref{HG:ALP}.
Accordingly, by the Dominated Convergence Theorem,
$$ \lim_{x'\to 0} \int_{\R^{n-1}} G(x',y')\,dy'=
\int_{\R^{n-1}} G(0,y')\,dy'.$$
Consequently,
\begin{eqnarray*}&& \lim_{x'\to 0}\int_{\R^{n-1}} \left[
F\left( \frac{u(x'+y')-u(x')}{|y'|}\right)
-F\left( \frac{\nabla u(x')\cdot y'}{|y'|}\right)
\right]\,
\frac{\zeta(y')}{|y'|^{n-1+2s}}\,dy'+\Psi(x')
\\&&\qquad=
\int_{\R^{n-1}} \left[
F\left( \frac{u(y')-u(0)}{|y'|}\right)
-F\left( \frac{\nabla u(0)\cdot y'}{|y'|}\right)
\right]\,
\frac{\zeta(y')}{|y'|^{n-1+2s}}\,dy'+\Psi(x'),\end{eqnarray*}
which, combined with~\eqref{KJFR589}, establishes the desired result.
\end{proof}

The result in Lemma~\ref{CONT:lemma:curvature} can be modified
to take into account sets with lower regularity properties.

\begin{lemma}\label{PAL7}
Let~$R>0$, $E\subseteq\R^n$ and~$x_o\in \partial E$.
For any $k\in\N$, let~$x_k\in\partial E$, with~$x_k\to x_o$
as~$k\to+\infty$, be such that~$E$ is
touched from the inside at~$x_k$
by a ball of radius~$R$, i.e. there exists~$p_k\in \R^n$ such that
\begin{equation}\label{POKJGH=k}
B_R(p_k)\subseteq E\end{equation}
and~$x_k\in \partial B_R(p_k)$.

Suppose that
$$ \int_{\R^n} \frac{\chi_{E}(y)- \chi_{E^c}(y)}{|
x_k-y|^{n+2s}}\,dy\le0.$$
Then
\begin{equation}\label{sense}
\int_{\R^n} \frac{\chi_{E}(y)- \chi_{E^c}(y)}{|
x_o-y|^{n+2s}}\,dy\le0.\end{equation}
\end{lemma}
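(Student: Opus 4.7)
The plan is to extract a limiting inside tangent ball at $x_o$, split the integrand so that the (singular) contribution of this ball is isolated as a known constant, and handle the remaining non-negative piece by Fatou's lemma.

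\textbf{Step 1 (limit ball).} Since $|p_k-x_k|=R$ and $x_k\to x_o$, the sequence $\{p_k\}$ is bounded, and up to a subsequence $p_k\to p_o$ for some $p_o\in\R^n$ with $|p_o-x_o|=R$, so $x_o\in\partial B_R(p_o)$. Passing to the a.e.\ limit in the a.e.\ pointwise convergence $\chi_{B_R(p_k)}\to \chi_{B_R(p_o)}$ (only the measure-zero set $\partial B_R(p_o)$ is exceptional), and using \eqref{POKJGH=k}, I get $B_R(p_o)\subseteq E$ up to a null set.

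\textbf{Step 2 (decomposition).} Since $B_R(p_k)\subseteq E$, one may write $\chi_E=\chi_{B_R(p_k)}+\chi_{E\setminus B_R(p_k)}$, and hence
\[
\chi_E(y)-\chi_{E^c}(y)=\bigl(\chi_{B_R(p_k)}(y)-\chi_{\R^n\setminus B_R(p_k)}(y)\bigr)+2\,\chi_{E\setminus B_R(p_k)}(y).
\]
Integrating against $|x_k-y|^{-n-2s}$ in the principal-value sense, the first term yields a constant $I_R$ \emph{independent of $k$}: by translating $p_k$ to the origin and rotating, the integral reduces to the fractional mean curvature of the round ball $B_R(0)$ at the fixed boundary point $R\mathbf{e}_1$, a finite number depending only on $n,s,R$. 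The second term is the integral of a non-negative function and needs no principal value. Thus the hypothesis becomes
\[
I_R+2J_k\le 0,\qquad J_k:=\int_{\R^n}\frac{\chi_{E\setminus B_R(p_k)}(y)}{|x_k-y|^{n+2s}}\,dy\in[0,+\infty),
\]
and in particular $J_k\le -I_R/2$ uniformly in $k$.

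\textbf{Step 3 (Fatou and reassembly).} Since $\chi_{E\setminus B_R(p_k)}(y)=\chi_E(y)\bigl(1-\chi_{B_R(p_k)}(y)\bigr)$, for a.e.\ $y\neq x_o$ one has $\chi_{E\setminus B_R(p_k)}(y)\to\chi_{E\setminus B_R(p_o)}(y)$ and $|x_k-y|^{-n-2s}\to|x_o-y|^{-n-2s}$. The integrands being non-negative, Fatou's lemma gives
\[
J_o:=\int_{\R^n}\frac{\chi_{E\setminus B_R(p_o)}(y)}{|x_o-y|^{n+2s}}\,dy\;\le\;\liminf_{k\to\infty}J_k\;\le\;-\frac{I_R}{2}.
\]
Applying the same decomposition at $x_o$ with the ball $B_R(p_o)$ yields
\[
\int_{\R^n}\frac{\chi_E(y)-\chi_{E^c}(y)}{|x_o-y|^{n+2s}}\,dy=I_R+2J_o\le 0,
\]
which is exactly \eqref{sense}.

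The main difficulty I expect is the bookkeeping of the principal value in Step 2: one has to justify that truncating the integrals at $B_\varepsilon(x_k)$ and letting $\varepsilon\to 0$ splits the PV integral into the constant $I_R$ plus the genuine (monotone) integral $2J_k$, and that the assumed finiteness of the PV integral at $x_k$ then transfers to the uniform quantitative bound on $J_k$ that drives the Fatou argument.
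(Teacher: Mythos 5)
Your proof is correct, and it takes a genuinely different route from the paper's. The paper localizes: it fixes a small scale $\lambda$, brackets the contribution in $B_\lambda(x_k)$ from below by $-CR^{-2s}\lambda^{1-2s}$ using the trapping estimate of Lemma~\ref{Trap} (comparing the ball with a tangent half-space and the reflected ball $B_R(q_k)$), applies the Dominated Convergence Theorem on the complement $B_\lambda^c(x_k)$ as $k\to+\infty$, and then lets $\lambda\to0$. You instead exploit the global algebraic decomposition $\chi_E-\chi_{E^c}=(\chi_{B_R(p_k)}-\chi_{\R^n\setminus B_R(p_k)})+2\chi_{E\setminus B_R(p_k)}$ valid under \eqref{POKJGH=k}: the first summand gives, in the principal-value sense, a $k$-independent finite constant $I_R$ by translation and rotation invariance, while the second is a non-negative integrand requiring no regularization, so that the hypothesis forces the monotone integral $J_k$ to be finite with the uniform bound $J_k\le -I_R/2$, after which Fatou passes to the limit. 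Your argument is cleaner and avoids the $\lambda$-splitting entirely, essentially because it uses that a ball has a well-defined, shape-independent fractional mean curvature. The paper's more hands-on estimate, however, is the one that survives the generalization in Lemma~\ref{lemma36}, where the ball is replaced by a $C^{1,\alpha}$ surface that changes with $k$ and hence produces no fixed constant $I$; there the trapping estimate (Lemma~\ref{TRAP-C1}) is exactly the flexible tool needed, whereas your decomposition would not directly carry over. For the lemma as stated, both proofs are valid, and yours is arguably the shorter one.
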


\begin{proof} 
Fix~$\lambda>0$, to be taken arbitrarily small
in the sequel. Let~$
q_k:=p_k+2(x_k-p_k)$. We observe that the ball~$B_R(q_k)$
is tangent to~$B_R(p_k)$ at~$x_k$.
Therefore, by Lemma~\ref{Trap},
\begin{equation}\label{JU:P} 
\int_{B_\lambda(x_k)\setminus
\big(B_R(p_k)\cup B_R(q_k)\big)} \frac{dy}{|x_k-y|^{n+2s}}
\le CR^{-2s} \lambda^{1-2s},\end{equation}
for some~$C>0$.
Also, using~\eqref{POKJGH=k},
\begin{equation}
\label{KH67-2}
\int_{B_\lambda(x_k)}
\frac{\chi_{E}(y)- \chi_{E^c}(y)}{|x_k-y|^{n+2s}}\,dy
\ge
\int_{B_\lambda(x_k)}
\frac{\chi_{B_R(p_k)}(y)- \chi_{B_R^c(p_k)}(y)}{|x_k-y|^{n+2s}}\,dy.\end{equation}
Now we define~$T_k$ to be the half-space passing through~$x_k$
with normal parallel to~$x_k-p_k$ and containing~$B_R(p_k)$.
By symmetry,
$$ \int_{B_\lambda(x_k)}
\frac{\chi_{T_k}(y)- \chi_{T_k^c}(y)}{|x_k-y|^{n+2s}}\,dy=0.$$
Using this, \eqref{KH67-2} and~\eqref{JU:P}, we obtain that
\begin{equation}\label{ALL-1}
\begin{split}
& \int_{B_\lambda(x_k)}
\frac{\chi_{E}(y)- \chi_{E^c}(y)}{|x_k-y|^{n+2s}}\,dy
\\&\qquad\ge
\int_{B_\lambda(x_k)}
\frac{\chi_{B_R(p_k)}(y)- \chi_{B_R^c(p_k)}(y)}{|x_k-y|^{n+2s}}\,dy
-
\int_{B_\lambda(x_k)}
\frac{\chi_{T_k}(y)- \chi_{T_k^c}(y)}{|x_k-y|^{n+2s}}\,dy
\\ &\qquad=- 2\int_{B_\lambda(x_k)\cap \big( T_k\setminus B_R(p_k)\big)}
\frac{dy}{|x_k-y|^{n+2s}}
\\ &\qquad\ge -CR^{-2s} \lambda^{1-2s}.
\end{split}\end{equation}
Now we define
$$ f_k(y):= \chi_{B_\lambda^c(x_k)}\cdot
\frac{\chi_{E}(y)- \chi_{E^c}(y)}{|x_k-y|^{n+2s}}.$$
We observe that~$f_k$ vanishes in~$B_\lambda(x_k)$. Also,
if~$y\in B_{2\lambda}(x_o)\setminus B_\lambda(x_k)$, we have that~$
|f_k(y)|\le \frac{1}{\lambda^{n+2s}}$. Moreover, if~$y\in
\R^n\setminus B_{2\lambda}(x_o)$, we have that
$$ |y-x_o|\le |y-x_k|+|x_k-x_o|\le |y-x_k|+\lambda\le|y-x_k|+
\frac{|y-x_o|}{2},$$
as long as~$k$ is large enough, and so~$|y-x_k|\ge \frac{|y-x_o|}{2}$,
which gives that~$|f_k(y)|\le  \frac{1}{|x-x_o|^{n+2s}}$
for any~$y\in\R^n\setminus B_{2\lambda}(x_o)$.
As a consequence of these observations, we can use the
Dominated Convergence Theorem and obtain that
$$ \lim_{k\to+\infty}
\int_{B_\lambda^c(x_k)}
\frac{\chi_{E}(y)- \chi_{E^c}(y)}{|x_k-y|^{n+2s}}\,dy
=
\lim_{k\to+\infty} \int_{\R^n} f_k(y)\,dy
= \int_{\R^n} 
\lim_{k\to+\infty} 
f_k(y)\,dy=
\int_{B_\lambda^c(x_o)}
\frac{\chi_{E}(y)- \chi_{E^c}(y)}{|x_o-y|^{n+2s}}\,dy.$$
Thus, if~$k$ is large enough,
$$ \int_{B_\lambda^c(x_k)}
\frac{\chi_{E}(y)- \chi_{E^c}(y)}{|x_k-y|^{n+2s}}\,dy \ge
\int_{B_\lambda^c(x_o)}
\frac{\chi_{E}(y)- \chi_{E^c}(y)}{|x_o-y|^{n+2s}}\,dy-
R^{-2s} \lambda^{1-2s}.$$
Thus, recalling~\eqref{ALL-1},
\begin{eqnarray*}
0&\ge&
\int_{\R^n}
\frac{\chi_{E}(y)- \chi_{E^c}(y)}{|x_k-y|^{n+2s}}\,dy 
\\ &=&
\int_{B_\lambda(x_k)}
\frac{\chi_{E}(y)- \chi_{E^c}(y)}{|x_k-y|^{n+2s}}\,dy 
+
\int_{B_\lambda^c(x_k)}
\frac{\chi_{E}(y)- \chi_{E^c}(y)}{|x_k-y|^{n+2s}}\,dy 
\\ &\ge&
\int_{B_\lambda^c(x_o)}
\frac{\chi_{E}(y)- \chi_{E^c}(y)}{|x_o-y|^{n+2s}}\,dy-
CR^{-2s} \lambda^{1-2s},\end{eqnarray*}
up to renaming~$C>0$ line after line. Then, \eqref{sense},
in the principal value sense, follows
by sending~$\lambda\to0$.
\end{proof}

A variation of Lemma~\ref{PAL7}
deals with
the touching by sufficiently smooth hypersurfaces, instead of balls.
In this sense, the result needed 
for our scope 
is the following:

\begin{lemma}\label{lemma36}
Let~$\Lambda>0$.
Let~$E\subseteq\R^n$ and~$x_o\in \partial E$.
For any $k\in\N$, let~$x_k\in\partial E$, with~$x_k\to x_o$ 
as~$k\to+\infty$, 
be such that~$E$
is
touched from the inside in~$B_\Lambda(x_k)$
at~$x_k$ by a surface
of class~$C^{1,\alpha}$, with $C^{1,\alpha}$-norm bounded
independently of~$k$ and~$\alpha\in(2s,1]$.
Suppose that
$$ \int_{\R^n} \frac{\chi_{E}(y)- \chi_{E^c}(y)}{|
x_k-y|^{n+2s}}\,dy\le0.$$
Then
\begin{equation*}
\int_{\R^n} \frac{\chi_{E}(y)- \chi_{E^c}(y)}{|
x_o-y|^{n+2s}}\,dy\le0.\end{equation*}
\end{lemma}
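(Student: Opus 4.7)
The plan is to mirror the proof of Lemma~\ref{PAL7}, replacing the role of the tangent ball (estimated via Lemma~\ref{Trap}) with the tangent hyperplane to the $C^{1,\alpha}$ touching surface (estimated via Lemma~\ref{TRAP-C1}). The hypothesis $\alpha>2s$ is precisely what will make the local trapping term vanish in the small-scale limit, so the overall limiting procedure closes exactly as in Lemma~\ref{PAL7}.

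More concretely, for each $k$ I let $S_k\subseteq\R^n$ denote a set whose boundary realises the interior touching at $x_k$, so that $S_k\cap B_\Lambda(x_k)\subseteq E$ and $x_k\in(\partial S_k)\cap(\partial E)$. Since the kernel $|x_k-y|^{-n-2s}$ is invariant under rigid motions centered at $x_k$, I may assume $x_k=0$ and align the interior normal of $\partial S_k$ at $x_k$ with $e_n$. Then $\partial S_k\cap B_\Lambda$ is a graph $y_n=u_k(y')$ with $u_k(0)=0$, $\nabla u_k(0)=0$ and $|u_k(y')|\le C_o\,|y'|^{1+\alpha}$, where $C_o$ depends only on the uniform $C^{1,\alpha}$-bound supplied by the hypothesis. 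Let $T_k:=\{y_n\le 0\}$ be the tangent half-space at the origin. For any $\lambda\in(0,\Lambda]$ the symmetric difference $S_k\triangle T_k$ inside $B_\lambda$ is contained in $\{|y_n|\le C_o|y'|^{1+\alpha}\}\cap B_\lambda$, hence Lemma~\ref{TRAP-C1} with $L=\lambda$ gives
$$\int_{B_\lambda}\frac{\chi_{S_k\triangle T_k}(y)}{|y|^{n+2s}}\,dy\;\le\;\frac{C\,C_o}{\alpha-2s}\,\lambda^{\alpha-2s}.$$
Using the inclusion $S_k\subseteq E$ on $B_\lambda(x_k)$ together with the odd symmetry identity $\int_{B_\lambda}\frac{\chi_{T_k}-\chi_{T_k^c}}{|y|^{n+2s}}\,dy=0$, I will then obtain the local lower bound
$$\int_{B_\lambda(x_k)}\frac{\chi_E(y)-\chi_{E^c}(y)}{|x_k-y|^{n+2s}}\,dy\;\ge\;-\frac{C\,C_o}{\alpha-2s}\,\lambda^{\alpha-2s}.$$

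For the complementary region $B_\lambda^c(x_k)$ I will repeat verbatim the Dominated Convergence step from Lemma~\ref{PAL7}: since $x_k\to x_o$, the integrand is majorised, for $k$ large, by a fixed $L^1(\R^n\setminus B_{\lambda/2}(x_o))$ function and converges pointwise to the corresponding integrand based at $x_o$. Summing the two estimates, invoking the standing hypothesis that the full integral at $x_k$ is nonpositive, and sending $k\to+\infty$ followed by $\lambda\to 0$ (legitimate precisely because $\alpha-2s>0$) yields the desired inequality at $x_o$ in the principal value sense. The one point that genuinely requires care beyond Lemma~\ref{PAL7} is extracting the uniform coefficient $C_o$ in the estimate $|u_k(y')|\le C_o\,|y'|^{1+\alpha}$; this is the main, though mild, obstacle, and it is handled exactly by the uniform $C^{1,\alpha}$-norm bound assumed in the statement of the lemma.
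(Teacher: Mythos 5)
Your proposal is correct and takes essentially the same route as the paper: replace the two tangent balls of Lemma~\ref{PAL7} by the touching $C^{1,\alpha}$ surface, use the tangent half-space $T_k$ for the symmetry cancellation, invoke Lemma~\ref{TRAP-C1} for the local trapping estimate of size $\lambda^{\alpha-2s}$ (with the uniform $C^{1,\alpha}$-bound giving a $k$-independent $C_o$), and close with the Dominated Convergence argument exactly as in Lemma~\ref{PAL7}. The only cosmetic difference is that the paper, to mirror Lemma~\ref{PAL7} literally, introduces the even reflection $P_k^-$ of the touching region and estimates $B_\lambda(x_k)\setminus(P_k^+\cup P_k^-)$, whereas you bound the symmetric difference $S_k\triangle T_k$ with the tangent half-space directly; both reduce to the same application of Lemma~\ref{TRAP-C1}.
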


\begin{proof} The proof is similar to the one of Lemma~\ref{PAL7}.
The only difference is that~\eqref{JU:P} is replaced here by
\begin{equation}\label{JU:P:here}
\int_{B_\lambda(x_k)\setminus (P_k^+\cup P_k^-)} \frac{dy}{|x_k-y|^{n+2s}}
\le C\,\lambda^{\alpha-2s },\end{equation}
where~$\lambda\in(0,\Lambda)$ can be taken arbitrarily small
and~$P_k^+$ is a region with $C^{1,\alpha}$-boundary
that is contained in~$E$ and~$P_k^-$ is the even reflection of~$P_k^+$
with respect to the tangent plane of~$P_k^+$ at~$x_k$.
In this framework, \eqref{JU:P:here}
is a consequence of
Lemma~\ref{TRAP-C1}.

The rest of the proof follows the arguments given in
the proof of
Lemma~\ref{PAL7}, substituting~$B_R(p_k)$ and~$B_R(q_k)$
with~$P_k^+$ and~$P_k^-$.
\end{proof}

\section{Graph properties of $s$-minimal sets and
proof of Theorem \ref{GRAPH}}\label{HY:08}

The goal of this section is to prove Theorem~\ref{GRAPH}.

\begin{proof}[Proof of Theorem~\ref{GRAPH}]
First we show that~$(\partial E)\cap\Omega$ is a graph, namely that
\begin{equation}\label{67-09uihfGG77}
{\mbox{formula \eqref{p98T78-00gra}
holds true, for some~$v:\R^{n-1}\to\R$.}}
\end{equation}
The idea is to slide $E$ from above till it touches itself.
Namely, for any~$t\ge0$, we let~$E_t:= E+te_n$.
We also define
$$ \Gamma:=\partial\Big( (\partial E) \cap \Omega\Big) \;{\mbox{ and }}\;
\Gamma_t:=\partial\Big( (\partial E_t) \cap \Omega\Big).$$
By Lemma~\ref{1212},
\begin{equation}\label{HJ:GRAPH:1}
\Omega_o\times(-\infty,-M)\,\subseteq\, E\cap \Omega\,
\subseteq\,\Omega_o\times(-\infty,M),\end{equation}
for some~$M\ge0$.
Hence, if~$t>2M$, then~$\Gamma_t$ lies above~$\Gamma$
(with respect to the $n$th coordinate).
So we take the smallest~$t$ for which such position holds, namely
we set
\begin{equation}\label{DEt}
t := \inf \{ \tau {\mbox{ s.t. $\Gamma_\tau$ lies above~$\Gamma$}} \}.\end{equation}
Our goal is to show that
\begin{equation}\label{t=0}
t=0.\end{equation}
Indeed, if we show that~$t=0$, 
we could define~$v(x'):= \inf\{ \tau {\mbox{ s.t. }}
(x',\tau)\in E^c\}$ and obtain that~$E\cap \Omega_o$
is the subgraph of~$v$.

To prove that~$t=0$, we argue by contradiction, assuming that
\begin{equation}\label{11K45fQQ}
t>0,\end{equation}
and so there is a contact point between $\Gamma$
and~$\Gamma_t$.

We remark that, in our framework, the set $\partial E$
may have some vertical portions along $\partial\Omega$
(and indeed, this is the ``typical'' picture that we deal with,
see \cite{NOSTRO}). Hence, the two sets~$\partial E$
and~$\partial E_t$ may share some common vertical portions along~$\partial \Omega$.
Roughly speaking, these vertical portions do not really consist
of contact points since they do not prevent the sliding of the sets
$E$ and $E_t$ by keeping the inclusion (for instance,
in Figure \ref{ES098-1} the only contact
point is the black dot named $p$, while in Figure \ref{ES098-2}
we have two contact points given by the black dots $p$ and $q$).

\begin{figure}
    \centering
    \includegraphics[width=14cm]{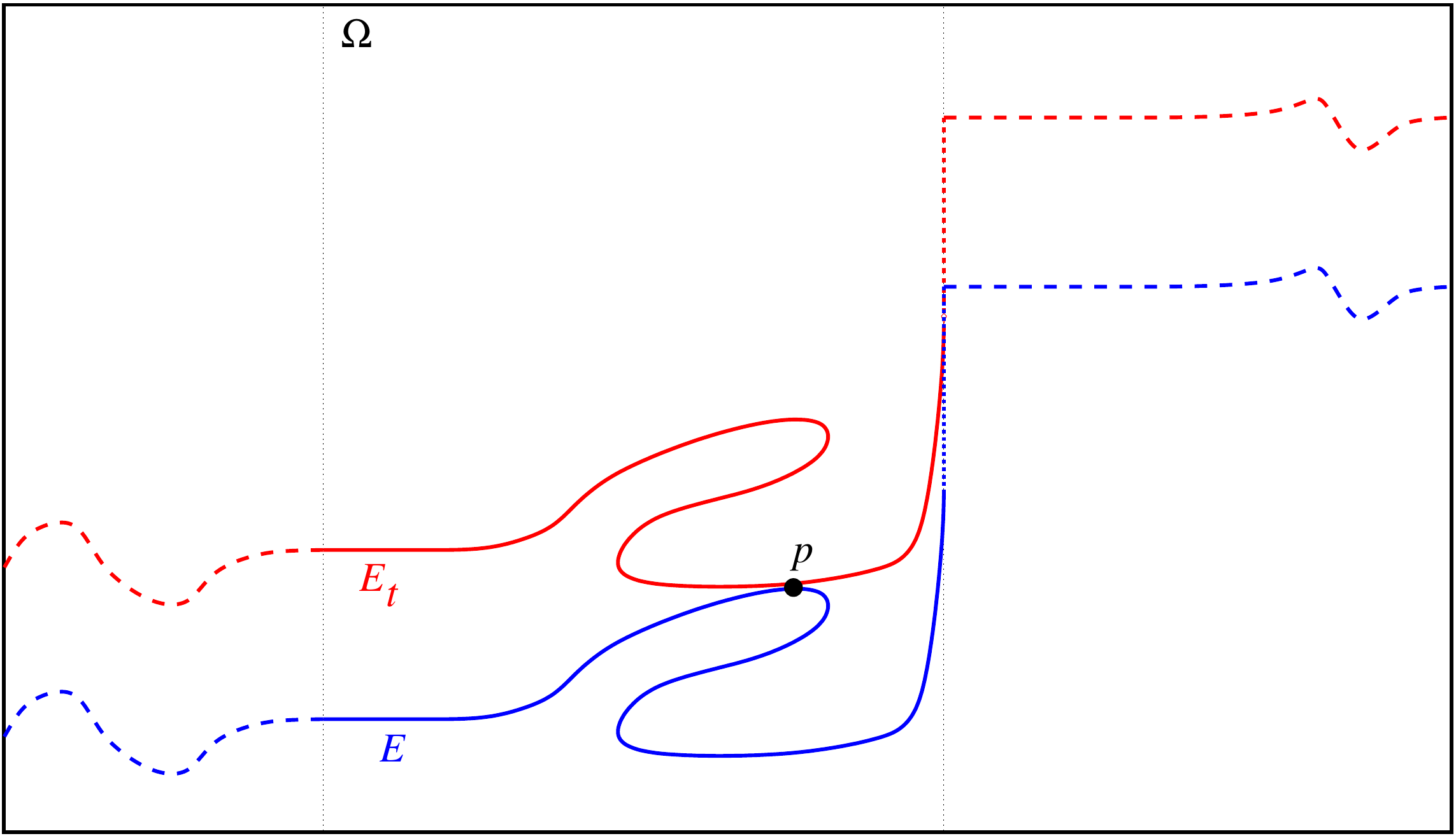}
    \caption{The case in \eqref{POSS-GRAPH:1}.}
    \label{ES098-1}
\end{figure}

To formalize this notion, we explicitly define the set of contact
points between $\Gamma$ and $\Gamma_t$ as
\begin{equation}\label{KDef10}
{\mathcal{K}} := \Gamma\cap\Gamma_t=
\Big(\partial\big( (\partial E) \cap\Omega\big)\Big) \cap 
\Big(\partial\big((\partial E_t) \cap\Omega\big)\Big).\end{equation}
The definition of first contact time given in \eqref{DEt}
gives that ${\mathcal{K}}\ne \varnothing$.

We distinguish two cases, according to whether all the contact
points are interior, or there are boundary contacts
(no other possibilities occur, thanks to~\eqref{HJ:GRAPH:2}). Namely,
we have that
either 
\begin{equation}\label{POSS-GRAPH:1}
{\mbox{all the contact points lie in~$\Omega_o\times\R$,}}
\end{equation}
i.e. ${\mathcal{K}} \subseteq \Omega$,
or
\begin{equation}\label{POSS-GRAPH:2}
{\mbox{there exists a contact point in~$(\partial\Omega_o)\times\R$,}}
\end{equation}
i.e. ${\mathcal{K}} \cap(\partial \Omega)\ne\varnothing$.

\begin{figure}
    \centering
    \includegraphics[width=14cm]{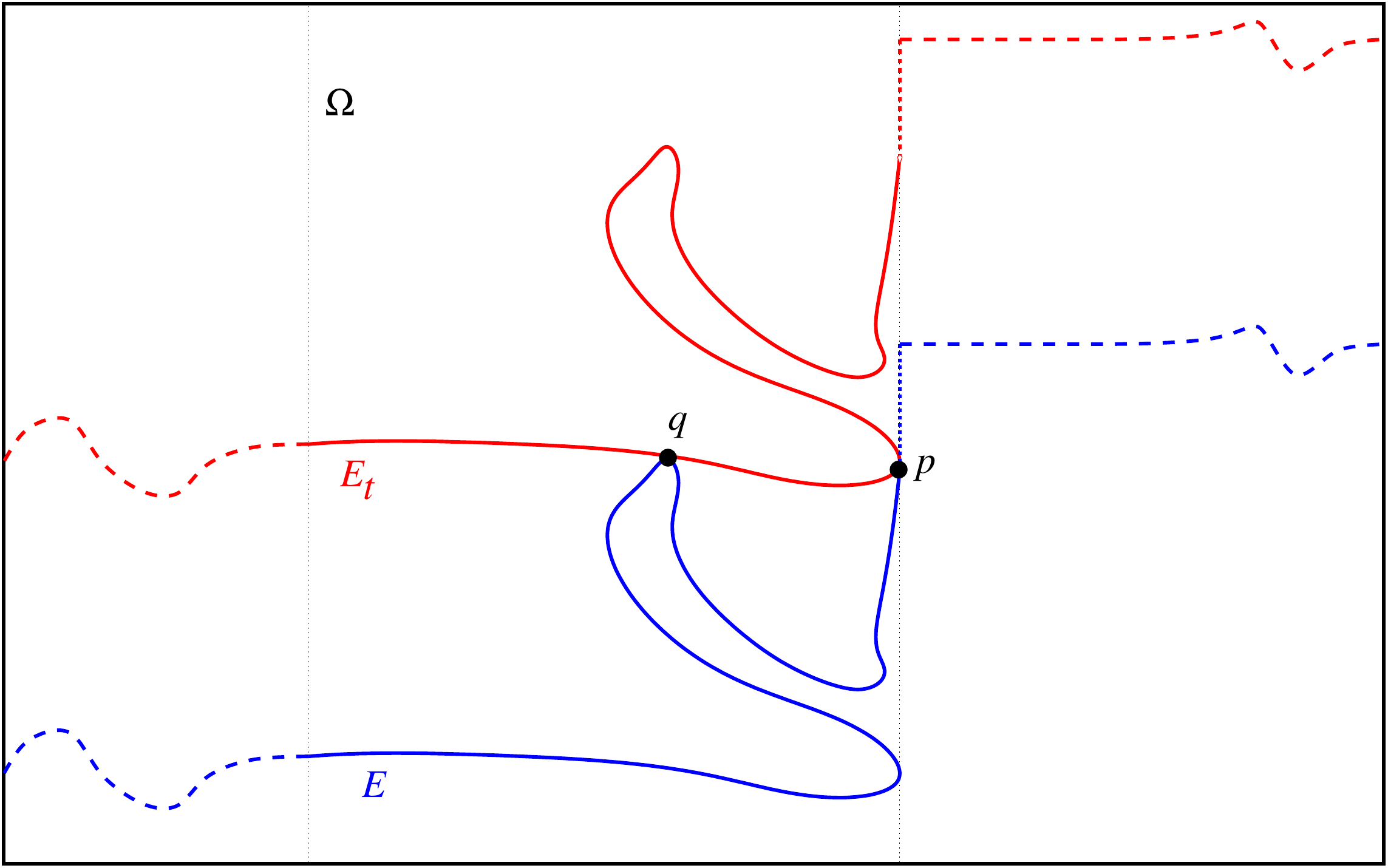}
    \caption{The case in \eqref{POSS-GRAPH:2}.}
    \label{ES098-2}
\end{figure}

The case in \eqref{POSS-GRAPH:1} is depicted in Figure \ref{ES098-1}
and the case in
\eqref{POSS-GRAPH:2} is depicted in Figure \ref{ES098-2}.
The rest of the proof will take into account these two cases
separately.
To be precise,
when~\eqref{POSS-GRAPH:1} holds true,
we will get a contradiction by using the supconvolution method
(since we did not assume any regularity of the surface that we
study),
while if \eqref{POSS-GRAPH:2} holds true we will
take advantage of the
regularity in the obstacle problem for nonlocal
minimal surfaces (see~\cite{PRO} and Theorem~\ref{PRO:TH} here).
\medskip

\noindent{\em The case in which~\eqref{POSS-GRAPH:1} holds true.}
Assume first that~\eqref{POSS-GRAPH:1} is satisfied.
Then we consider the subconvolution of~$E$
and we slide it from above till it touches the supconvolution of~$E$
(in the notation of Section~\ref{SEC:SUBCO}).
More explicitly, fixed~$\delta$ and $\eta>0$, to be taken suitably
small in the sequel,
and,
for any~$\tau\in\R$, we consider~$E^\flat_\delta+\tau e_n$.

Now we recall the notation in \eqref{Oema}
and,
by taking $\eta$ and~$\delta$ sufficiently small, we deduce from~\eqref{POSS-GRAPH:1}
that ${\mathcal{K}}$ lies inside $\Omega_{16(\delta+\eta)}$, at some
positive distance from $\partial\Omega$ (which is uniform in $\eta$
and~$\delta$).

By~\eqref{HJ:GRAPH:1},
we have that if~$\tau$ is large, then
$$ (E^\flat_\delta+\tau e_n)\cap
\Omega\supseteq
E^\sharp_\delta\cap \Omega$$
and so, in particular,
\begin{equation}\label{1x2}
(E^\flat_\delta+\tau e_n)\cap
\Omega_\eta\supseteq
E^\sharp_\delta\cap \Omega_\eta.\end{equation}
So we take the smallest~$\tau=\tau_{\delta,\eta}$ for which
such inclusion holds. {F}rom~\eqref{11K45fQQ}, we have that
\begin{equation}\label{ZR56}
\tau\ge \frac{t}{2}>0,
\end{equation}
for small~$\delta$ and $\eta$.
Also, by~\eqref{POSS-GRAPH:1}
(recall also the first statement in Corollary~\ref{Cor-CONV}),
if~$\delta$ is small enough,
we obtain that~$\big(\partial(E^\flat_\delta+\tau e_n)\big)\cap
\Omega_\eta$
and~$(\partial E^\sharp_\delta)\cap
\Omega_\eta$ possess a contact point~$p$
in~$\Omega_o\times\R$ (namely, $p$ is close to the contact set~${\mathcal{K}}$
for small $\delta$ and $\eta$).
Now we distinguish two subcases:
either this is the first contact point in the whole of the space
or not. In the first subcase, we have that \eqref{1x2}
may be strengthen to~$E^\flat_\delta+\tau e_n
\supseteq
E^\sharp_\delta$, and therefore we can apply
Proposition~\ref{TOUCH:omega}, and we obtain
that~$E^\sharp_\delta=E^\flat_\delta+\tau e_n$.
By taking~$\delta$ arbitrarily small
and using~\eqref{ZR56}, we obtain that~$E=E+\tau_o e_n$,
with~$\tau_o\ge t/2>0$, which is in contradiction with~\eqref{HJ:GRAPH:2}.

The second subcase is when
the first contact point~$p$ in~$\Omega_\eta$
does not prevent the sets to overlap outside~$\Omega_\eta$.
In this case, we will show that this overlap only occurs either
in $\Omega\setminus\Omega_\eta$ or
at
infinity, and then we provide a contradiction arising from
the contribution in bounded sets.
Namely, first of all we 
recall the notation in~\eqref{CYLINDER:DEF} and \eqref{Oema2}
and we
notice that for any~$R>0$ there exist~$\delta_R$, $\eta_R>0$
such that for any~$\delta\in(0,\delta_R]$ and $\eta\in(0,\eta_R]$
we have that
\begin{equation}\label{inclusion}
(E^\flat_\delta+\tau e_n)\cap 
{\mathcal{D}}_{R,\eta}
\supseteq
E^\sharp_\delta\cap
{\mathcal{D}}_{R,\eta}.
\end{equation}
To prove~\eqref{inclusion}, we argue by contradiction.
If not, there exist some~$R>0$ and infinitesimal sequences~$\delta$, $\eta\to0$
such that~$ 
\big(E^\sharp_\delta
\setminus 
(E^\flat_\delta+\tau e_n)
\big)\cap 
{\mathcal{D}}_{R,\eta}\ne\varnothing$. Then, let~$q_{\delta,\eta}
=(q_{\delta,\eta}',q_{\delta,\eta,n})$ be a point in such set.
By construction~$|q_{\delta,\eta,n}|\le 3M+1$ and~$|q_{\delta,\eta}'|\le R$,
therefore, up to subsequences, as~$\delta$, $\eta\to0$,
we may suppose that~$\tau=\tau_{\delta,\eta}
\to\tau_\star$ and~$q_{\delta,\eta}\to q_\star=(q_\star',q_{\star,n})\in
\overline{
\big(E \setminus 
(E+\tau_\star e_n)\big)\cap
{\mathcal{D}}_{R,\eta} }$.
Hence, by~\eqref{1x2}, $q_\star\in\R^n\setminus\Omega$
and so, by~\eqref{HJ:GRAPH:2},
we have that~$u(q_\star')+\tau_\star\le q_{\star,n}\le u(q_\star')$.
This gives that~$\tau_\star\le0$, which is in contradiction with~\eqref{ZR56}
and thus completes the proof of~\eqref{inclusion}.

Now we fix~$R_o>0$ such that~$\Omega\subset {\mathcal{C}}_{R_o}$,
and we suppose that~$R>4(R_o+1)$.
Thanks to~\eqref{inclusion}, we can now use Proposition~\ref{TOUCH:omega-bis}
and obtain that
\begin{equation}\label{11K45fQQ-BIS}
\int_{{\mathcal{D}}_{R,\eta}} \frac{ \chi_{(E^\flat_\delta+\tau e_n)
\setminus E^\sharp_\delta}(y) }{|p-y|^{n+2s}}\,dy=
\int_{{\mathcal{D}}_{R,\eta}} \frac{\chi_{(E^\flat_\delta+\tau e_n)
\setminus E^\sharp_\delta}(y)-
\chi_{E^\sharp_\delta\setminus(E^\flat_\delta+\tau e_n)
}(y)}{|p-y|^{n+2s}}\,dy\le C(R^{-2s}+\eta),
\end{equation}
for some~$C>0$ that does not depend on
$R$, $\delta$ and $\eta$, provided that~$\delta$ and $\eta$
are small enough.

Since~$u$ is continuous in~$\R^{n-1}$, it is uniformly continuous
in compact sets and so we can define
$$ \sigma_\delta := \sup_{{|x'|, \;|y'|\le R_o +3 }\atop{|x'-y'|\le2\delta}}
|u(x')-u(y')|,$$
and we have that~$\sigma_\delta\to0$ as~$\delta\to0$.

We claim that, for small~$\delta>0$,
\begin{equation}\label{UNI}
\begin{split}
&{\mbox{if~$x=(x',x_n)\in \partial(E^\flat_\delta+\tau e_n)$,
$y=(y',y_n)\in \partial E^\sharp_\delta$ and~$x'=y'$,}}\\
&{\mbox{with~$|x'|\in (R_o+1,R_o+2)$,}}\\
&{\mbox{then $x_n \ge y_n+\frac{t}{4} $.}}
\end{split}
\end{equation}
To prove it,
we use the
first statement in Corollary~\ref{Cor-CONV} to find~$x_o\in (\partial E)+\tau e_n$
and~$y_o\in \partial E$ such that
$$\max\{|x-x_o|, \,|y-y_o|\}\le\delta.$$
Notice that~$x_{o,n}=u(x_o')+\tau$ and~$y_{o,n}=u(y_o')$.
Moreover, $|x'-x_o'|\le\delta$ and~$|x'-y_o'|=|y'-y_o'|\le\delta$,
hence~$|x'_o-y'_o|\le2\delta$.
Therefore
\begin{eqnarray*}
&& x_n - y_n =
x_n -x_{o,n} +u(x_o')+\tau - y_n +y_{o,n}-u(y_o') \\
&&\qquad\ge \tau -|x-x_{o}|-|y-y_o| - |u(x_o')-u(y_o')|\ge
\tau-2\delta -\sigma_\delta.
\end{eqnarray*}
This and~\eqref{ZR56}
imply~\eqref{UNI}, as desired.

Notice also that ${\mathcal{D}}_{R,\eta}\supset
{\mathcal{C}}_{R_o+2}\setminus {\mathcal{C}}_{R_o+1}$.
So we use~\eqref{inclusion} and~\eqref{UNI} to deduce that,
fixed~$R>4(R_o+1)$ and~$\delta>0$ small enough (possibly in dependence
of~$R$), 
$$ \int_{{\mathcal{D}}_{R,\eta} } \frac{\chi_{(E^\flat_\delta+\tau e_n)
\setminus E^\sharp_\delta} (y)}{|p-y|^{n+2s}}\,dy\ge
\int_{{\mathcal{C}}_{R_o+2}\setminus {\mathcal{C}}_{R_o+1}}
\frac{\chi_{(E^\flat_\delta+\tau e_n)
\setminus E^\sharp_\delta}(y)}{|p-y|^{n+2s}}\,dy\ge c_o t,$$
for some~$c_o>0$ (possibly depending on the fixed~$R_o$ and~$M$).
{F}rom this and~\eqref{11K45fQQ-BIS}, we obtain that~$t\le \tilde C( R^{-2s}+\eta)$,
for some~$\tilde C>0$ and so, by taking~$\eta$ as small as
we wish and $R$ as large as we wish,
we conclude that~$t=0$. This is in contradiction with~\eqref{11K45fQQ},
and so
we have completed the proof of Theorem~\ref{GRAPH}
under assumption~\eqref{POSS-GRAPH:1}. \medskip

\noindent{\em The case in which~\eqref{POSS-GRAPH:2} holds true.}
Now we deal with the case in which~\eqref{POSS-GRAPH:2}
is satisfied. Hence, there exists
a contact point~$p=(p',p_n)\in (\partial E_t)\cap(\partial E)$
with~$p'\in\partial\Omega_o$. More explicitly, we
notice that, by \eqref{KDef10},
\begin{equation}\label{obe}
p\in \Big(\overline{ (\partial E_t)\cap\Omega }\Big)\cap\Big(
\overline{ (\partial E)\cap\Omega }\Big).
\end{equation}
Now, we observe that~$E$ is a variational subsolution in a neighborhood of~$p$
(according to Definition~2.3 in~\cite{CRS}): namely,
if~$A\subseteq E\cap \Omega$ and~$p\in\overline{A}$, we have that
$$ 0\ge \per(E,\Omega)-\per(E\setminus A,\Omega)
=L(A,E^c)-L(A,E\setminus A).$$
Therefore (see Theorem~5.1 in~\cite{CRS}) we have that
\begin{equation}\label{iert}
\int_{\R^n} \frac{\chi_{E}(y)-
\chi_{\R^n\setminus E}(y)}{|p-y|^{n+2s}}\,dy\ge0.
\end{equation}
in the viscosity sense (i.e.~\eqref{iert}
holds true provided that~$E$ is touched
by a ball from outside at~$p$).

Our goal is now to establish fractional mean curvature estimates
in the strong sense.
For this, we notice that,
by~\eqref{11K45fQQ},
either
\begin{equation}\label{Ppos-p-1}
p_n\ne u(p')\end{equation}
or
\begin{equation}\label{Ppos-p-2}
p_{n}\ne u(p')+t.\end{equation}
We focus on the case in which~\eqref{Ppos-p-1} holds true
(the case in~\eqref{Ppos-p-2} can be treated
similarly, by exchanging the roles of~$E$ and~$E_t$). 

Then,
either~$B_r(p)\setminus\Omega \subseteq E$
or~$B_r(p)\setminus\Omega \subseteq E^c$, for a small~$r>0$.
In any case, by Theorem~\ref{PRO:TH},
we have that~$(\partial E)\cap B_r(p)$
is a $C^{1,\frac{1}{2}+s}$-graph in the direction of
the normal of~$\Omega$ at~$p$, up to renaming~$r$.

Let~$\nu(p)=(\nu'(p),\,\nu_n(p))$ be such normal, say, in the interior
direction.
Since~$\Omega$ is a cylinder, we have that~$\nu_n(p)=0$. 
Also, up to a rotation
we can suppose that~$\nu'(p)=e_1$. In this framework,
we can write~$\partial E$ in the vicinity of~$p$
as a graph~$G:=\{ x_1=\Psi (x_2,\dots,x_n)\}$,
for a suitable~$\Psi\in C^{1,\frac{1}{2}+s}(\R^{n-1})$,
with~$\Psi(p_2,\dots,p_n)=p_1$.

We observe that
\begin{equation}\label{obe2}
{\mbox{there exists a sequence of points~$p^{(k)}\in G$ such that~$p^{(k)}
\in\Omega$ and~$p^{(k)}\to p$ as $k\to+\infty$.}}
\end{equation}
Indeed, if not, we would have that~$\partial E$ in the vicinity
of~$p$ lies in~$\Omega^c$. This is in contradiction with~\eqref{obe}
and so it proves~\eqref{obe2}.

{F}rom~\eqref{obe2}, we obtain that there exists a sequence
of points~$p^{(k)}\to p$, such that
\begin{equation}\label{Jkms33d}
{\mbox{$\partial E$ near~$p^{(k)}$
is a graph of class~$C^{1,\frac{1}{2}+s}$}}\end{equation}
and
$$ \int_{\R^n} \frac{\chi_{E}(y)-
\chi_{E^c}(y)}{|p^{(k)}-y|^{n+2s}}\,dy=0.$$
As a consequence of this, \eqref{Jkms33d}, and Lemma~\ref{lemma36} 
(applied to both~$E$ and~$E^c$)
we obtain that
\begin{equation*}
\int_{\R^n} \frac{\chi_{E}(y)-
\chi_{E^c}(y)}{|p-y|^{n+2s}}\,dy=0.
\end{equation*}
Hence, since~$E_t\supseteq E$ (and they are not equal,
thanks to~\eqref{11K45fQQ}),
\begin{equation}\label{Hg:i679HJl67}
\int_{\R^n} \frac{\chi_{E_t}(y)-
\chi_{E^c_t}(y)}{|p-y|^{n+2s}}\,dy>0.
\end{equation}
%% So, using the change of variables~$y\mapsto y-te_n$, we conclude that
%% \begin{equation*}
%% \int_{\R^n} \frac{\chi_{E}(y)-
%% \chi_{E^c}(y)}{|p_t-y|^{n+2s}}\,dy>0.
%% \end{equation*}
Also, since~$E_t\supseteq E$, we have that~$(\partial E_t)
\cap B_{\frac{r}{4}}(p)$
can only lie on one side of the graph~$G$,
i.e. 
\begin{eqnarray}
\label{ALT:er:1}
&&{\mbox{either~$E_t\cap B_{\frac{r}{4}}(p)\supseteq
\{ x_1\ge\Psi (x_2,\dots,x_n)\}$}}\\
\label{ALT:er:2}
&&{\mbox{or~$E_t\cap B_{ \frac{r}{4} }(p)\subseteq
\{ x_1\le\Psi (x_2,\dots,x_n)\}$,}}
\end{eqnarray}
see Figure \ref{h67FF}.

\begin{figure}
    \centering
    \includegraphics[height=5.9cm]{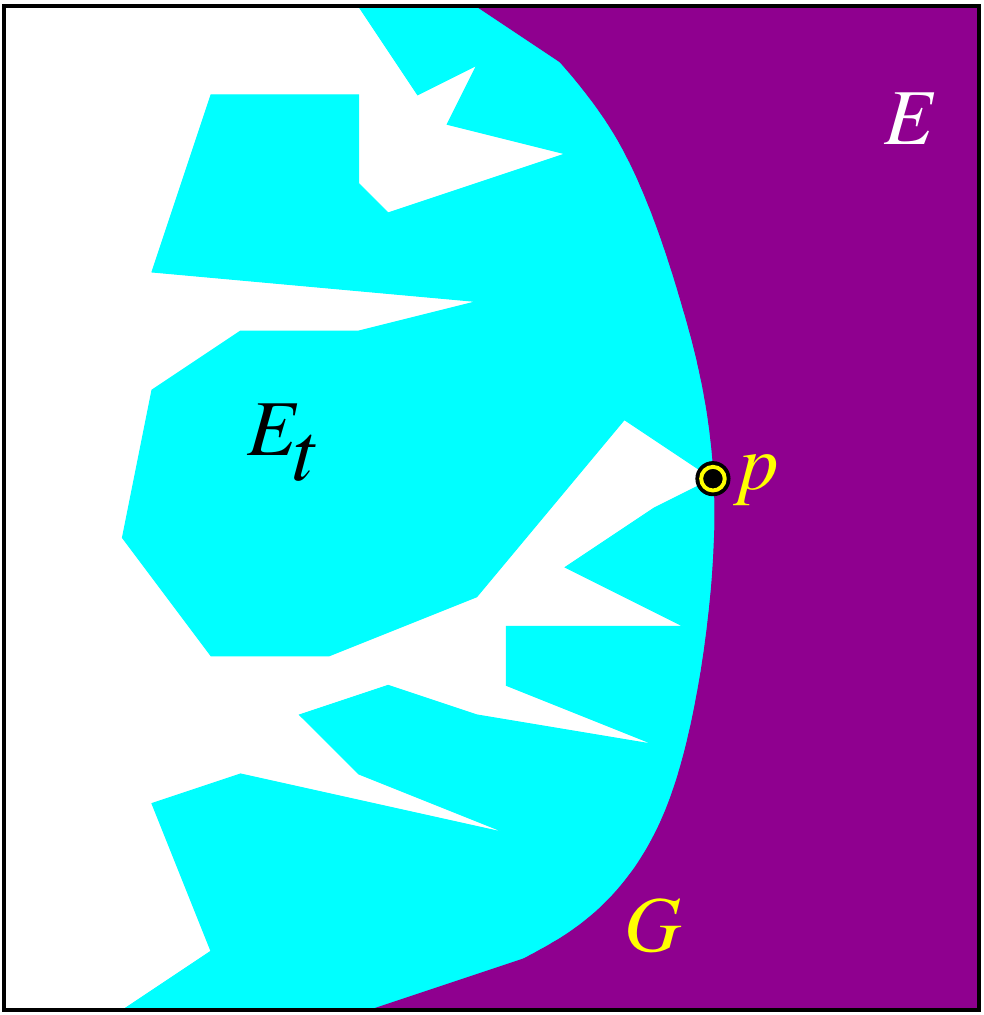}
$\;\;\;\;\;\;\;\;\;$
    \includegraphics[height=5.9cm]{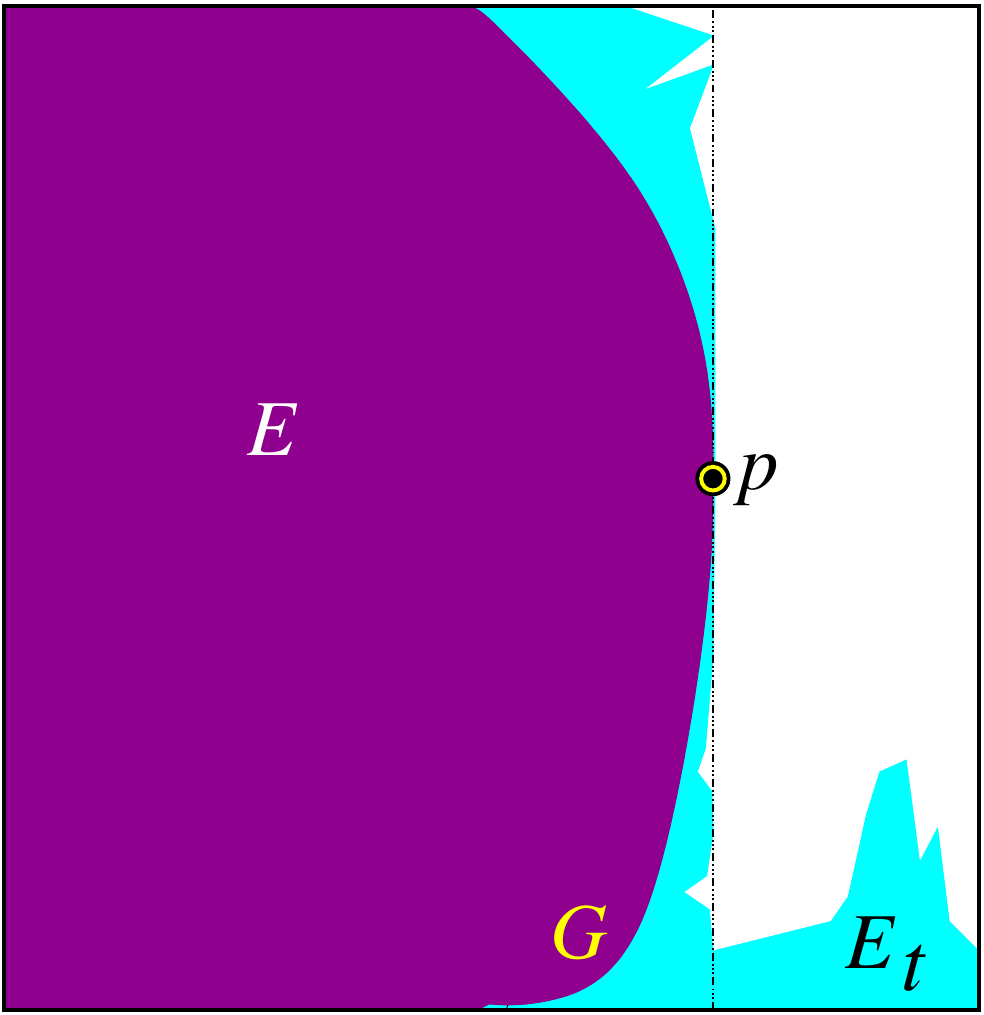}
    \caption{The alternative in \eqref{ALT:er:1} and~\eqref{ALT:er:2}.}
    \label{h67FF}
\end{figure}

In any case (recall~\eqref{obe}), we have that there exists a
sequence of points~$\tilde p^{(k)}\in(\partial E_t)\cap\Omega$
that can be touched by a surface of class
class~$C^{1,\frac{1}{2}+s}$ lying in~$E_t$
(indeed, for this we can either enlarge balls centered at~$G$,
or slide a translation of~$G$, see Figure~\ref{10-h67FF}).

\begin{figure}
    \centering
    \includegraphics[height=5.9cm]{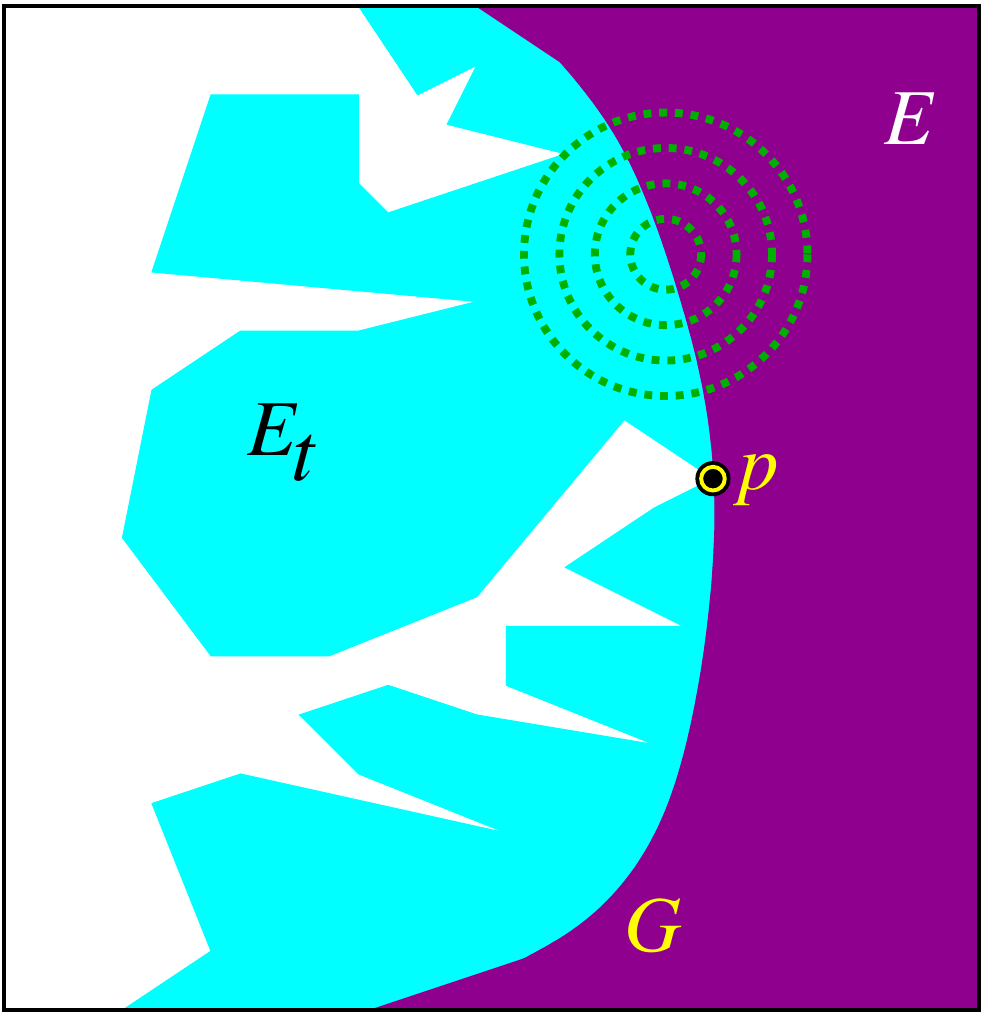}
$\;\;\;\;\;\;\;\;\;$
    \includegraphics[height=5.9cm]{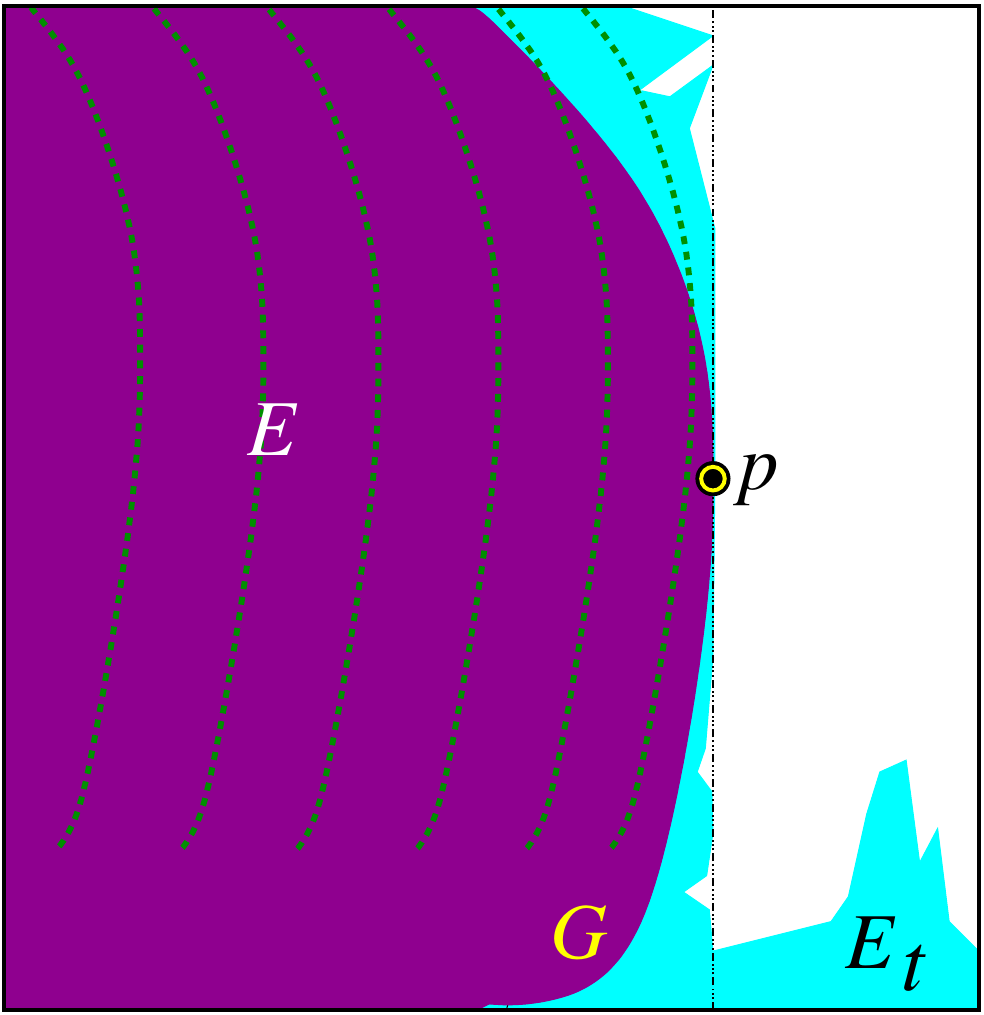}
    \caption{Touching $\partial E_t$,
according to the alternative in \eqref{ALT:er:1} and~\eqref{ALT:er:2}.}
    \label{10-h67FF}
\end{figure}

Then
$$ \int_{\R^n} \frac{\chi_{E_t}(y)-
\chi_{E^c_t}(y)}{|\tilde p^{(k)}-y|^{n+2s}}\,dy\leq0.$$
Hence, by Lemma \ref{lemma36},
$$ \int_{\R^n} \frac{\chi_{E_t}(y)-
\chi_{E^c_t}(y)}{|p-y|^{n+2s}}\,dy\leq0.$$
This is in contradiction with~\eqref{Hg:i679HJl67}
and so the proof of \eqref{67-09uihfGG77} is complete.

Now, to finish the proof of
Theorem~\ref{GRAPH}, we need to check the
properties on the function~$v$ stated in Theorem~\ref{GRAPH}.
By construction, $v$ and~$u$ coincide outside~$\Omega_o$,
since they both describe the boundary of~$E$.
Moreover, $v$ is continuous on~$\overline{\Omega_o}$:
to prove this, suppose by contradiction that
$$ \ell_+ :=\limsup_{{x'\to p'}\atop{x'\in\Omega_o}} u(x') >
\liminf_{{x'\to p'}\atop{x'\in\Omega_o}} u(x') =:\ell_-,$$
for some~$p'\in\overline{\Omega_o}$.
Then~$(x',u(x'))\in (\partial E)\cap\Omega$ and so both~$(p', \ell_-)$
and~$(p',\ell_+)$ belong to~$\partial\big( (\partial E)\cap\Omega \big)$.
As a consequence, sliding~$E_t$ from above as in~\eqref{DEt},
we would obtain that~$t\ge \ell_+-\ell_->0$, in contradiction with~\eqref{t=0}.
Hence the proof of
Theorem~\ref{GRAPH} is complete.
\end{proof}

\section{Smoothness in dimension $3$ and
proof of Theorem \ref{coro:reg}}\label{LJrtyuoo}

The goal of this section is to prove Theorem \ref{coro:reg}.
For this, we state the following result, concerning the boundary
regularity of nonlocal minimal surfaces, which is a direct
consequence of~\cite{PRO}:

\begin{theorem}\label{PRO:TH}
Let~$\Omega$ be an open and bounded subset
of~$\R^{n}$ with boundary of class~$C^{1,\alpha}$,
with~$\alpha\in \left( s+\frac12,\;1\right)$.

Let~$E$ be an~$s$-minimal set in~$\Omega$ and suppose that~$p\in (\partial
\Omega)\cap(\partial E)$.

Assume also that either
\begin{equation}\label{PRO:C-1}
B_r(p)\setminus\Omega\subseteq E
\end{equation}
or
\begin{equation}\label{PRO:C-2}
B_r(p)\setminus\Omega\subseteq E^c,
\end{equation}
for some~$r>0$.

Then, there exists~$r'\in(0,r)$, depending on~$n$, $s$, $\alpha$
and the~$C^{1,\alpha}$ regularity of~$\Omega$, such that~$(\partial E)\cap
B_{r'}(p)$ is of class~$C^{1,\frac12+s}$.
\end{theorem}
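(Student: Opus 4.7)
The plan is to recognize that the hypotheses convert the minimization problem near $p$ into an obstacle problem for the $s$-perimeter, to which the boundary regularity theory developed in \cite{PRO} applies essentially as a black box. The statement is labelled a \emph{theorem} but is really a translation of a known result into a convenient form, so the work is in setting up the reduction, not in new estimates.

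First, I would focus on case \eqref{PRO:C-1}, since \eqref{PRO:C-2} follows by complementation: if $B_r(p)\setminus\Omega\subseteq E^c$, replace $E$ by $\R^n\setminus E$, which is $s$-minimal in $\Omega$ with the same boundary. Under \eqref{PRO:C-1}, the hypothesis pins the exterior data of $E$ in $B_r(p)$: on $B_r(p)\setminus\Omega$ the set $E$ equals the whole complement of $\Omega$. Because $E$ is $s$-minimal in $\Omega$, for any set $F$ with $F\supseteq B_r(p)\setminus\Omega$ and $F=E$ outside $\Omega$, one has $\per(E,\Omega)\le \per(F,\Omega)$. Restricting competitors to those that sit above the \emph{obstacle} $\psi := B_r(p)\setminus\Omega$ (thought of as a fixed subset of $\R^n$ that minimizers must contain), we see that $E$ is a minimizer of the nonlocal perimeter subject to the one-sided constraint $E\supseteq\psi$ in a neighborhood of $p$. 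This is precisely the nonlocal obstacle problem studied in \cite{PRO}.

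Next I would flatten the boundary. Since $\partial\Omega\in C^{1,\alpha}$ near $p$ with $\alpha\in(s+\tfrac12,1)$, there is a local $C^{1,\alpha}$ diffeomorphism $\Phi$ mapping $B_r(p)\cap\Omega$ into a half-ball and $(\partial\Omega)\cap B_r(p)$ into a flat piece; the obstacle $\psi$ is then transported to a half-space on the other side, whose boundary inherits $C^{1,\alpha}$ regularity. Under such a $C^{1,\alpha}$ change of variables, the $s$-perimeter functional is perturbed by a lower-order kernel contribution (of order $|x-y|^{\alpha}$ against the singular kernel), so the pushforward $\Phi(E)$ is still an obstacle solution for a kernel obeying the hypotheses of \cite{PRO}. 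This is the only point where the condition $\alpha>s+\tfrac12$ really enters: it guarantees that the perturbation stays within the admissible class and that the conclusion $C^{1,1/2+s}$ can be pulled back through $\Phi^{-1}$ without loss.

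The final step is to invoke the main result of \cite{PRO}: at any free boundary point of the nonlocal obstacle problem whose obstacle has a $C^{1,\alpha}$ boundary with $\alpha>s+\tfrac12$, the free boundary is locally $C^{1,\frac12+s}$. Applying this to $\Phi(E)$ at $\Phi(p)$ and pulling back via the $C^{1,\alpha}$ diffeomorphism (whose regularity exceeds $1+\tfrac12+s$ in composition, so the class $C^{1,\frac12+s}$ is preserved) yields that $(\partial E)\cap B_{r'}(p)$ is of class $C^{1,\frac12+s}$ for some $r'\in(0,r)$ depending only on $n$, $s$, $\alpha$ and the $C^{1,\alpha}$ norm of $\partial\Omega$ at $p$. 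The main technical point to check — and the only place where care is needed — is that the localized minimality of $E$ in $\Omega$ really does give the one-sided minimality against all competitors $F\supseteq \psi$ in a small neighborhood of $p$, and that the flattening procedure places us literally inside the framework of \cite{PRO}; everything else is a citation.
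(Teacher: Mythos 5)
Your core reduction is exactly the paper's: case~\eqref{PRO:C-2} follows from~\eqref{PRO:C-1} by complementation, and under~\eqref{PRO:C-1} the exterior data force $E$ to contain $B_r(p)\setminus\Omega$, so $E$ is a minimizer of the constrained (obstacle) problem near $p$, after which one invokes the obstacle-regularity theorem of~\cite{PRO}. The paper makes this precise by constructing a bounded domain $\mathcal{O}$ with $C^{1,\alpha}$ boundary sandwiched between $B_1\cap\Omega^c$ and $B_2\cap\Omega^c$, verifying $0\in\partial\mathcal{O}$ and $\mathcal{O}\cap B_1\subseteq E$, and then checking that any competitor $F$ with $F\supseteq\mathcal{O}\cap B_1$ and $F=E$ outside $B_1$ coincides with $E$ outside $\Omega$, hence $\per(E,B_1)\le\per(F,B_1)$; this is precisely the hypothesis of Theorem~1.1 in~\cite{PRO}, applied directly in Euclidean coordinates.

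Where you diverge is the flattening step, and this is where a real gap opens. The obstacle theorem in~\cite{PRO} is formulated for the honest $s$-perimeter with obstacle boundary of class $C^{1,\alpha}$, $\alpha>s+\frac12$ --- so there is no need to straighten $\partial\Omega$, and doing so pushes you outside the stated scope of~\cite{PRO}. A $C^{1,\alpha}$ diffeomorphism replaces $|x-y|^{-n-2s}$ by a kernel of the form $K(x,y)\,|x-y|^{-n-2s}$ with only H\"older-continuous $K$, and you do not justify that the regularity theory of~\cite{PRO} survives such a perturbation; dismissing it as ``lower-order'' is not a substitute for the verification, which in this setting is the whole point. Relatedly, the assertion that $\alpha>s+\frac12$ ``really enters'' only through the flattening mischaracterizes the hypothesis: it is a condition already imposed in Theorem~1.1 of~\cite{PRO} for the obstacle geometry itself, independently of any change of variables. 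Dropping the flattening and instead constructing a $C^{1,\alpha}$ obstacle $\mathcal{O}$ as in the paper both simplifies the argument and keeps you literally inside the framework you intend to cite.
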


\begin{proof} Without loss of generality, we can assume that~$r=2$,
$p=0$ and~\eqref{PRO:C-1} holds true. Then we take~${\mathcal{O}}$
to be a domain with boundary of class~$C^{1,\alpha}$ and such that
$$ B_{1}\cap\Omega^c \subseteq {\mathcal{O}}
\subseteq B_{2}\cap\Omega^c.$$
By construction
\begin{equation}\label{Sp-C-01}
0\in \partial{\mathcal{O}}
\end{equation}
and
\begin{equation}\label{Sp-C-02}
{\mathcal{O}}\cap B_1=B_1 \cap\Omega^c \subseteq B_{2}\cap\Omega^c\subseteq E.\end{equation}
Now we observe that
if~$F$ contains~${\mathcal{O}}\cap B_1$, then
$$ E\cap \Omega^c \cap B_1 = E\cap {\mathcal{O}}\cap B_1
= {\mathcal{O}}\cap B_1 = F\cap {\mathcal{O}}\cap B_1
= F\cap \Omega^c \cap B_1.$$
Also,
if~$F\setminus B_1=
E\setminus B_1$, then
$$ E\cap \Omega^c \cap B_1^c = F\cap \Omega^c \cap B_1^c.$$
Therefore,
if~$F$ contains~${\mathcal{O}}\cap B_1$ and~$F\setminus B_1=
E\setminus B_1$, then
$$ E\cap \Omega^c = (E\cap \Omega^c \cap B_1)\cup(
E\cap \Omega^c \cap B_1^c)= (F\cap \Omega^c \cap B_1)\cup(
F\cap \Omega^c \cap B_1^c)=F\cap \Omega^c,$$
thus, by the minimality of~$E$,
$$ \per(E,\Omega)\le\per(F,\Omega),$$
and therefore
\begin{equation}\label{Sp-C-03}
\per(E,B_1)-\per(F,B_1)=\per(E,\Omega)-\per(F,\Omega)\le0.
\end{equation}
Thanks to~\eqref{Sp-C-01}, \eqref{Sp-C-02} and~\eqref{Sp-C-03},
we can apply Theorem~1.1 in~\cite{PRO} 
and conclude
that~$(\partial E)\cap
B_{r'}$ is of class~$C^{1,\frac12+s}$.
\end{proof}

With this, we are ready to complete the proof of Theorem \ref{coro:reg}.

\begin{proof}[Proof of Theorem \ref{coro:reg}]
By Theorem~\ref{GRAPH},
we know that~$E$ is an epigraph, i.e.~\eqref{aJk:hg1}
holds true for some~$v:\R^{2}\to\R$. It remains to
show that
\begin{equation}
v\in C^\infty(\Omega_o).
\end{equation}
For this, 
we take~$x_o\in (\partial E)\cap\Omega$
and we show that~$v$ is~$C^\infty$ in a neighborhood of~$x_o$.
Up to a translation, we suppose that~$x_o$ is the origin.
Now we consider a blow-up~$E_0$ of the set~$E$, i.e.,
for any~$r>0$,
we define~$E_r:=\frac{E}{r}:=\{ \frac{x}{r} {\mbox{ s.t. }}x\in E\}$
and~$E_0$ to be a cluster point for~$E_r$ as~$r\to0$
(see Theorem~9.2 in~\cite{CRS}). In this way, we have that~$E_0$
is an~$s$-minimal set, and it is an epigraph (see e.g.~(5.8)
in~\cite{figalli}).
Thus, by Corollary~1.3 in~\cite{figalli},
we deduce that~$E_0$ is a half-space.

Hence, by Theorem 9.4 in~\cite{CRS}, we have that~$\partial E$
is a graph of class $C^{1,\alpha}$
in the vicinity\footnote{As a technical remark, we point out that
with the methods of Theorem~9.4 in~\cite{CRS},
one obtains $C^{1,\alpha}$ regularity for any~$\alpha<s$, but this
exponent can be further improved, since
flat minimal surfaces are~$C^{1,\beta}$ for any~$
\beta <1$, with estimates, as proved in Theorem~2.7 of~\cite{PRO}.}
of the origin -- and, as a matter of fact,
of class~$C^\infty$, thanks to
Theorem~1 of~\cite{BEGO}.
\end{proof}

\appendix

\section{Choosing a ``good'' representative for the
$s$-minimal set}\label{9udfdtfd6yh665544}

For completeness, in this appendix we give full details
about the convenient choice of the representative
of an $s$-minimal set.
Indeed, when dealing with an $s$-minimal set, it is useful
to consider a representation of the set which avoids
unnecessary pathologies
(conversely, a bad choice of the set may lead
to the formations of additional boundaries, which come
from subsets of measure zero and can therefore
be neglected). First of all, one would like to
chose the representative of the set with ``the smallest
possible boundary''. We will also show that
one can reduce the analysis to an open set, by
considering the points of the set which are interior
in the sense of measures, according to the following observation:

\begin{lemma}\label{LE:LAIH0}
Let $\Omega$ be an open subset of~$\R^n$, with~$|\partial\Omega|=0$
and~$E$ be an $s$-minimal set in~$\Omega$.
Assume that~$E\setminus\Omega$ is open in~$\R^n\setminus\Omega$.
Let
\begin{equation}\label{8udGGHJ:JH}\begin{split}
E_1\,&:= \big\{ x\in\R^n {\mbox{ s.t. there exists }} r>0 {\mbox{ s.t. }}
|E\cap B_r(x)|=|B_r(x)| \big\}\\
&= \big\{ x\in\R^n {\mbox{ s.t. there exists }} r>0 {\mbox{ s.t. }}
|B_r(x)\setminus E|=0 \big\}.\end{split}\end{equation}
Then:
\begin{eqnarray}
\label{L678-0}
&& {\mbox{$E_1$ is an open set,}}\\
\label{L678-1}
&& {\mbox{$E$ and $E_1$ coincide, up to a set of measure zero,}}\\
\label{L678-2}
&& {\mbox{there exists~$c\in(0,1)$ such that for any $x\in\partial E_1$
and any $r>0$ for which $B_r(x)\subset\Omega$}}\\
&& {\mbox{there exist~$y_1$, $y_2\in B_r(x)$ such that~$B_{cr}(y_1)
\subset B_r(x)\cap E_1$ and~$B_{cr}(y_2)
\subset B_r(x)\setminus E_1$,}} \nonumber \\
\label{L678-3}
&& {\mbox{for any $x\in\partial E_1$, }}
0 < \frac{|E_1\cap B_r(x)|}{|B_r(x)|}<1.
\end{eqnarray}
\end{lemma}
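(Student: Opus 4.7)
My plan is to verify the four conclusions in sequence, since each later one builds on the earlier ones. For \eqref{L678-0}, a one-line argument suffices: if $x\in E_1$ with witness radius $r$, then every $y\in B_{r/2}(x)$ satisfies $B_{r/2}(y)\subseteq B_r(x)$, hence $|B_{r/2}(y)\setminus E|\le |B_r(x)\setminus E|=0$, so $y\in E_1$ and $B_{r/2}(x)\subseteq E_1$.

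For \eqref{L678-1}, I would prove the two inclusions modulo null sets separately. The direction $|E_1\setminus E|=0$ is immediate from the Lebesgue differentiation theorem: every $x\in E_1$ is a Lebesgue density~$1$ point of $E$, and the set of density~$1$ points of $E$ coincides with $E$ up to a null set. For the direction $|E\setminus E_1|=0$, I would split into the regions outside and inside~$\Omega$. Outside~$\Omega$, the hypothesis that $E\setminus\Omega$ is open in $\R^n\setminus\Omega$, combined with $|\partial\Omega|=0$, handles it directly, since every point of $E\setminus\Omega$ has a small relative neighborhood in $E$ whose complement inside a full ball is contained in $\partial\Omega$. Inside $\Omega$, I would invoke the uniform interior and exterior density estimates for $s$-minimal sets from \cite{CRS}, which force the measure-theoretic boundary of~$E$ to have Lebesgue measure zero in $\Omega$; almost every point of $E\cap\Omega$ is then a density-$1$ point, and a short Vitali-type argument using the openness proved in \eqref{L678-0} places such points inside $E_1$ up to a null set.

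The clean-ball property \eqref{L678-2} is the heart of the lemma and the step I expect to be the main obstacle. I would start from the density estimates of \cite{CRS}, applied both to $E$ and to its complement (which is again $s$-minimal, by the symmetry $\per(E,\Omega)=\per(\R^n\setminus E,\Omega)$), yielding $c_\star |B_r(x)|\le |E_1\cap B_r(x)|\le (1-c_\star)|B_r(x)|$ for $x\in\partial E_1$ and $B_r(x)\subset\Omega$, with $c_\star>0$ universal. To upgrade these measure bounds to the existence of concrete subballs $B_{cr}(y_1)\subseteq E_1\cap B_r(x)$ and $B_{cr}(y_2)\subseteq B_r(x)\setminus E_1$, I would iterate the density estimate at dyadic subscales: if no interior ball of radius $cr$ already sits inside $E_1\cap B_r(x)$, the openness of $E_1$ from \eqref{L678-0} together with a covering argument produces a point $z\in\partial E_1\cap B_{r/2}(x)$ at which the density estimate can be re-run at scale $r/2$. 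After a universal number of iterations the density of $E_1$ is forced arbitrarily close to~$1$ on some subball, and a final measure-to-ball extraction delivers an interior ball of radius comparable to~$r$. The symmetric argument applied to $\R^n\setminus E_1$ produces $y_2$.

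Finally, \eqref{L678-3} is an immediate consequence of \eqref{L678-2}: the balls $B_{cr}(y_1)$ and $B_{cr}(y_2)$ force both $|E_1\cap B_r(x)|$ and $|B_r(x)\setminus E_1|$ to be at least $|B_{cr}|>0$, so the density ratio lies strictly between $0$ and~$1$.
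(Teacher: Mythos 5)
Your outline correctly identifies \eqref{L678-0} and \eqref{L678-3} as soft consequences, and you are right that the density estimates of \cite{CRS} are the entry point. But there is a genuine gap in the passage from measure bounds to actual balls, which infects both \eqref{L678-1} and \eqref{L678-2}. For \eqref{L678-2}, the dyadic iteration you describe does not close: re-running the interior/exterior density estimate at a point of $\partial E_1$ at scale $r/2$, $r/4$, \dots\ returns the \emph{same} bounds $c_\star$ and $1-c_\star$ at each scale — nothing is forced toward $1$ — and an open set of measure $\ge c_\star|B_r|$ inside $B_r$ need not contain any ball of radius comparable to $r$ (it can be a union of arbitrarily small balls). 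The statement you are trying to re-derive is precisely the Clean Ball Condition, Corollary~4.3 of \cite{CRS}, which is stronger than the density estimate and is what the paper invokes as a black box. Similarly for \eqref{L678-1}: it is simply false for a general measurable set that almost every density-$1$ point lies in $E_1$ (take $E$ a fat Cantor set: $E_1=\varnothing$, yet a.e.\ point of $E$ has density $1$). So ``Lebesgue differentiation $+$ Vitali'' cannot suffice; the $s$-minimality must enter again through the Clean Ball Condition, which shows that at any point of $\Omega$ outside $E_0\cup E_1$ (and in the right Borel representative) the density cannot converge to $1$ or $0$.

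The paper proceeds differently on the technical level: it first replaces $E$ by a canonical Borel representative $\tilde E=(E\cup E_1)\setminus E_0$ via Proposition~3.1 of \cite{Giusti} (so that $\partial\tilde E$ is exactly the set where density is strictly between $0$ and $1$), and then applies the Clean Ball Condition at points of $\partial\tilde E$, not at $\partial E_1$ directly. For \eqref{L678-1}, it shows that $(\tilde E\setminus(E_0\cup E_1))\cap\Omega$ contains no Lebesgue points of $\tilde E$, hence has measure zero. For \eqref{L678-2}, given $x\in\partial E_1$ it first locates a genuine measure-theoretic boundary point $q_r\in\partial\tilde E\cap B_{r/2}(x)$ and applies the Clean Ball Condition there. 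If you replace your iteration with a direct citation of Corollary~4.3 of \cite{CRS} and route both arguments through $\tilde E$ as above, your outline becomes essentially the paper's proof.
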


\begin{proof}
The statement in~\eqref{L678-0} comes directly from~\eqref{8udGGHJ:JH},
so we focus on the proof of~\eqref{L678-1},
\eqref{L678-2} and~\eqref{L678-3}. The proof of these facts
is a consequence of the density estimates 
of $s$-minimal sets, see~\cite{CRS}.
We provide full details for the convenience of the reader.

First of all, we can reduce from measurable sets
to Borel sets, up to sets of measure zero, see e.g.
Theorem~2.20(b) in~\cite{Rudin}. Hence we may suppose that~$E$
is Borel. Also, we observe that, since~$E\setminus\Omega$
is relatively open in~$\R^n\setminus\Omega$, we have that
\begin{equation}\label{B3-sdtgf556}
{\mbox{$E_1\setminus
\overline\Omega$ coincides with~$E\setminus
\overline\Omega$.}}\end{equation} Also, the symmetric difference of~$E_1$ and~$E$
restricted to~$\partial\Omega$ has zero measure, since so does~$\partial\Omega$,
therefore the proof of~\eqref{L678-1} is nontrivial
only due to the possible contributions inside~$\Omega$.

Now we set
\begin{eqnarray*}
E_0&:=& \big\{ x\in\R^n {\mbox{ s.t. there exists }} r>0 {\mbox{ s.t. }}
|E\cap B_r(x)|=0 \big\},\\
{\mbox{and }}\;
\tilde E &:=& (E\cup E_1)\setminus E_0.
\end{eqnarray*}
By Proposition~3.1
in~\cite{Giusti}, one has that~$\tilde E$
coincides with~$E$ up to sets of measure zero, that
\begin{equation}\label{78Gh4r-1ujs}
|E\cap E_0|=
|E_1\setminus E|=0,\end{equation}
and
\begin{equation}\label{ZZIONE}
0 < \frac{|E\cap B_r(x)|}{|B_r(x)|}<1\end{equation}
for any~$x\in \partial E$ and any~$r>0$.

Also, by construction, we see that~$E_0\cap E_1=\varnothing$
and so
\begin{equation}\label{e1-po}
E_1\subseteq\tilde E.\end{equation}

We set~$L$ to be the set of Lebesgue points of~$\tilde E$, i.e.
the set of all points~$x\in \tilde E$ such that
$$ \lim_{r\to0} \frac{|\tilde E\cap B_r(x)|}{|B_r(x)|}=1.$$
We recall (see e.g. Theorem~7.7 in~\cite{Rudin})
that
\begin{equation}\label{87yGGH789psp-0}
|\tilde E\setminus L|=0.\end{equation}
Now we claim that
\begin{equation}\label{87yGGH789psp}
\big( \tilde E\setminus (E_0\cup E_1)\big)\cap\Omega\subseteq \tilde E\setminus L.
\end{equation}
To prove this, let~$x\in \big(\tilde E\setminus (E_0\cup E_1)\big)\cap\Omega$.
Then, for any~$r>0$, we have that~$|E\cap B_r(x)|>0$
and~$|E^c \cap B_r(x)|>0$, 
and so the same inequalities hold for~$\tilde E$ replacing~$E$.

Accordingly, for any~$r>0$ such that~$B_r(x)\subset\Omega$, there exists~$p_r\in B_r(x)\cap(\partial \tilde E)$.
By~\eqref{ZZIONE} and the Clean Ball Condition in Corollary~4.3
of~\cite{CRS}, we deduce that there exist
points~$y_{1,r}$ and~$y_{2,r}$
such that~$B_{cr}(y_{1,r})
\subset B_r(p_r)\cap \tilde E$ and~$B_{cr}(y_{2,r})
\subset B_r(p_r)\setminus \tilde E$, for a universal~$c\in(0,1)$.

Since~$B_r(p_r)\subseteq B_{2r}(x)$,
we see that~$B_{cr}(y_{1,r})
\subset B_{2r}(x)\cap \tilde E$ and~$B_{cr}(y_{2,r})
\subset B_{2r}(x)\setminus \tilde E$, and therefore
$$  \frac{|\tilde E\cap B_{2r}(x)|}{|B_{2r}(x)|}
\le \frac{|B_{2r}(x)\setminus B_{cr}(y_{2,r})|}{|B_{2r}(x)|}
= \frac{2^n-c^n}{2^n}.$$
%%% and $$ \frac{|\tilde E\cap B_{2r}(x)|}{|B_{2r}(x)|}
%%% \ge \frac{|B_{cr}(y_{1,r})|}{|B_{2r}(x)|}= \frac{c^n}{2^n}.$$
This implies that
$$ \limsup_{r\to0}
\frac{|\tilde E\cap B_{2r}(x)|}{|B_{2r}(x)|}
\le \frac{2^n-c^n}{2^n} <1,$$
and so~$x\not\in L$, which proves~\eqref{87yGGH789psp}.

By \eqref{B3-sdtgf556},
\eqref{87yGGH789psp} and~\eqref{87yGGH789psp-0}, we obtain that~$
|\tilde E\setminus (E_0\cup E_1)|=0$.
So, by~\eqref{78Gh4r-1ujs},
$$ |\tilde E\setminus E_1|\le |\tilde E\setminus (E_0\cup E_1)|+
|\tilde E\cap E_0|=0+|E\cap E_0|=0.$$
Moreover, by~\eqref{78Gh4r-1ujs},
we also have that
$$ |E_1\setminus\tilde E|=|E_1\setminus E|=0,$$
therefore~$E_1$ and~$\tilde E$ agree up to a set of measure
zero. This establishes~\eqref{L678-1}.

Now we prove~\eqref{L678-2}. For this, let~$x\in \partial E_1$
and suppose that~$B_r(x)\subset\Omega$.
By construction, there exist $a_r\in B_{r/4}(x)\cap E_1$
and~$b_r \in B_{r/4}(x)\setminus E_1$. 
So, by~\eqref{e1-po}, we have that
\begin{equation}\label{98ytsdcUUkaa-09}
a_r \in B_{r/4}(x)\cap\tilde E.
\end{equation}
Also, by construction, 
$$ |B_{r/4}(b_r)\setminus \tilde E|=|B_{r/4}(b_r)\setminus E|>0,$$
therefore there exists~$c_r\in B_{r/4}(b_r)\setminus \tilde E$,
and so in particular~$c_r\in B_{r/2}(x)\setminus \tilde E$.

{F}rom this and~\eqref{98ytsdcUUkaa-09}, we obtain that there exists~$q_r
\in B_{r/2}(x)\cap (\partial \tilde E)$.
Then, by the Clean Ball Condition in Corollary~4.3
of~\cite{CRS}, we deduce that there exist
points~$y_1$ and~$y_2$
such that
$$ B_{cr/2}(y_1)
\subset B_{r/2} (q_r)\cap \tilde E
\subset B_r (x)\cap \tilde E$$
and
$$ B_{cr/2}(y_2)
\subset B_r(q_r)\setminus \tilde E
\subset B_r (x)\setminus \tilde E,$$
for a universal~$c\in(0,1)$.
This says that~$B_{cr/2}(y_1)$ lies in~$E$
and $ B_{cr/2}(y_2)$ lies in~$E^c$, up to sets of measure zero,
therefore~$B_{cr/2}(y_1) \subseteq E_1$
and $ B_{cr/2}(y_2)\subseteq E_0\subseteq E_1^c$.
This completes the proof of~\eqref{L678-2}
(up to changing~$c/2$ to~$c$).

Now, \eqref{L678-3}
is a direct consequence of~\eqref{L678-2}.
\end{proof}

Thanks to Lemma~\ref{LE:LAIH0}, when
dealing with the $s$-minimal set~$E$ in the statement of
Theorems \ref{GRAPH} and~\ref{coro:reg},
we implicitly identify~$E$ with~$E_1$.

\section*{Acknowledgments}
The first author has been supported by EPSRC grant  EP/K024566/1
``Monotonicity formula methods for nonlinear PDEs'', ERPem
``PECRE Postdoctoral and Early Career Researcher Exchanges'' and Alexander von Humboldt Foundation.
The second author has been supported by
NSF grant DMS-1200701.
The third author has been supported by ERC grant 277749 ``EPSILON Elliptic
Pde's and Symmetry of Interfaces and Layers for Odd Nonlinearities''
and PRIN grant 201274FYK7
``Aspetti variazionali e
perturbativi nei problemi differenziali nonlineari''.

\end{document}